\documentclass[11pt]{article}
\usepackage[a4paper,margin=25mm]{geometry}
\usepackage{amsmath,amssymb,amsthm,mathtools}
\usepackage{capt-of}
\usepackage{microtype}
\usepackage{xcolor}
\usepackage{aliascnt}
\usepackage{hyperref}
\hypersetup{colorlinks=true,linkcolor=cyan,citecolor=red,urlcolor=black,filecolor=black}
\DeclareMathOperator{\Comp}{Comp}
\allowdisplaybreaks[2]
\numberwithin{equation}{section}
\newtheorem{theorem}{Theorem}[section]
\newaliascnt{proposition}{theorem}
\newtheorem{proposition}[proposition]{Proposition}
\aliascntresetthe{proposition}
\newaliascnt{lemma}{theorem}
\newtheorem{lemma}[lemma]{Lemma}
\aliascntresetthe{lemma}
\newaliascnt{corollary}{theorem}
\newtheorem{corollary}[corollary]{Corollary}
\aliascntresetthe{corollary}
\newtheorem*{conjecture}{Conjecture}
\theoremstyle{remark}

\begin{document}
\begin{center}
{\Large\bf
Negative edge correlation in complete bipartite graphs
}
\end{center}

\begin{center}
Ying Cao$^1$\quad\quad Houshan Fu$^{2,*}$\par\vspace{.2cm}

$^1$Department of Mathematics, Hong Kong University of Science and Technology\\
Clear Water Bay, Hong Kong\\\par
\vspace{.2cm}
$^2$School of Mathematics and Information Science, Guangzhou University\\
Guangzhou 510006, Guangdong, P. R. China\par
\vspace{.2cm}
$^*$Correspondence to be sent to: fuhoushan@gzhu.edu.cn \\
Email: ycaobf@connect.ust.hk
\end{center}

\begin{abstract}
{We prove strict pairwise negative correlation under both the
uniform forest measure and the uniform connected-subgraph measure on the
complete bipartite graph $K_{m,n}$ for all $m,n\geq2$.
Under the connected-subgraph measure, any two distinct edges of a fixed
copy of $K_{2,2}$ remain strictly negatively correlated after conditioning
on the subgraph induced by the remaining vertices.}

\par
\noindent\textbf{Keywords:} Complete bipartite graph, forest, connected subgraph, negative correlation
\par
\noindent\textbf{Mathematics Subject Classification:} 05C05, 05C30, 05C80, 60C05
\end{abstract}

\section{Introduction}
All graphs are finite and, unless stated otherwise, simple and undirected.
We write $V(G)$ and $E(G)$ for the vertex and edge sets of
a graph $G$.
Let $T$ be a uniformly chosen spanning tree of a connected
graph $G$. For distinct edges $e,f\in E(G)$,
\[
 \mathbb P(e,f\in E(T))
 \leq\mathbb P(e\in E(T))\mathbb P(f\in E(T)).
\]
Thus, the edge indicators are pairwise negatively correlated; see
\cite[Section~1]{GrimmettWinkler2004}. A natural question is whether the same inequality holds when the spanning
tree is replaced by a uniformly chosen forest or connected spanning
subgraph. 

For a graph $G$, define
\[
 \begin{aligned}
 \mathcal F(G)&\coloneqq\{(V(G),X):X\subseteq E(G),\ (V(G),X)\text{ is acyclic}\},\\
 \mathcal C(G)&\coloneqq\{(V(G),X):X\subseteq E(G),\ (V(G),X)\text{ is connected}\}.
 \end{aligned}
\]
A forest in $ \mathcal F(G)$ need not be connected or maximal. In particular,
$(V(G),\varnothing)\in\mathcal F(G)$.
Choosing uniformly from $\mathcal F(G)$ gives the \emph{uniform forest
measure}. For connected $G$, the uniform measure on $\mathcal C(G)$ is
the \emph{uniform connected-subgraph measure}.
For $X\subseteq E(G)$, let $\mathcal F_X(G)$ and
$\mathcal C_X(G)$ denote the respective subfamilies consisting of subgraphs
that contain every edge of $X$.
Following Oxley \cite[Section~15.8, p.~603]{Oxley2011}, define
\[
i_X(G)\coloneqq|\mathcal F_X(G)|,\qquad j_X(G)\coloneqq|\mathcal C_X(G)|.
\]
We omit empty subscripts and subscript braces. Thus,
$i(G)=i_\varnothing(G)$, $i_e(G)=i_{\{e\}}(G)$, and
$i_{ef}(G)=i_{\{e,f\}}(G)$. The forest and connected-subgraph
conjectures take the following form.
\begin{conjecture}[\cite{GrimmettWinkler2004}, Conjecture~1.1]
For every finite graph $G$ and distinct edges $e,f\in E(G)$,
\[
 i_e(G)i_f(G)-i_{ef}(G)i(G)\geq0,\qquad  j_e(G)j_f(G)-j_{ef}(G)j(G)\geq0.
\]
\end{conjecture}
These inequalities express pairwise negative correlation under
the uniform forest measure and, for connected $G$, under the uniform
connected-subgraph measure. This property is called
\emph{edge-negative-association} in \cite{GrimmettWinkler2004}.
For the forest conjecture, Pemantle
\cite[Section~3.1]{Pemantle2000} records a question raised by Peter
Winkler. Oxley~\cite[p.~603]{Oxley2011} attributes this conjecture to
Kahn~\cite{Kahn2000} and the connected-subgraph conjecture to Grimmett
and S.~N.~Winkler.

We study both conjectures on complete bipartite graphs, allowing their two
part sizes to vary independently.
Throughout, let $L$ and $R$ be disjoint sets with $|L|=m$ and $|R|=n$.
The complete bipartite graph $K_{m,n}$ has vertex set $L\cup R$ and edge set
\[
 E(K_{m,n})=\bigl\{\{u,v\}:u\in L,\ v\in R\bigr\}.
\]
Unless stated otherwise, the part sizes satisfy $m,n\geq2$.
Our first theorem proves the forest conjecture for complete bipartite graphs.

\begin{theorem}\label{thm:main-forest}
For integers $m,n\geq2$ and distinct edges $e,f$ of $K_{m,n}$,
\[
 i_e(K_{m,n})i_f(K_{m,n})-i_{ef}(K_{m,n})i(K_{m,n})>0.
\]
\end{theorem}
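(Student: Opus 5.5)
By the symmetry of $K_{m,n}$, there are only three cases for a pair of distinct edges $e,f$. In case (a) they share a vertex in $L$. In case (b) they share a vertex in $R$; this is case (a) with $m$ and $n$ swapped. In case (c) they are disjoint, so $e=\{u,v\}$ and $f=\{u',v'\}$ with $u\neq u'$ and $v\neq v'$. Edge-transitivity gives $i_e=i_f$, so in every case the claim reduces to $i_e(K_{m,n})^2>i_{ef}(K_{m,n})\,i(K_{m,n})$.

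My first step is to rewrite all three quantities in terms of deletion and contraction. Let $G=K_{m,n}$. The identities $i(G)=i(G\setminus e)+i_e(G)$ and $i_e(G)=i(G/e)$, with loops deleted and parallel edges kept, turn the inequality into a comparison of forest counts of small modifications of $G$. Writing $a=i(G\setminus e\setminus f)$, $b=i_e(G\setminus f)=i_f(G\setminus e)$ and $c=i_{ef}(G)$, we have $i_e=b+c$ and $i=a+2b+c$. The target then becomes
\[
(b+c)^2>c(a+2b+c)\iff b^2>ac .
\]
So I must show that the three-term sequence $a,b,c$ is strictly log-concave. This is a local Rayleigh-type inequality for the pair $e,f$.

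The second step is to obtain explicit or recursive formulas for $a$, $b$, $c$. Forests of $K_{m,n}$ can be counted with the classical formula for spanning trees of a complete bipartite graph containing prescribed vertex sets: a forest is determined by its component structure, and a tree on parts of sizes $p,q$ is counted by $p^{q-1}q^{p-1}$. I will condition on the components containing the endpoints $u,v$ (and $u',v'$ in case (c)). Suppose the component containing the relevant endpoints has $p$ vertices in $L$ and $q$ in $R$. Removing it leaves a forest of $K_{m-p,n-q}$, and the component itself is counted by a tree count with the required edges forced or forbidden. Forcing an edge $\{x,y\}$ in a spanning tree of $K_{p,q}$ gives $(p+q-1)p^{q-2}q^{p-2}$ by symmetry, since $(p+q-1)$ tree edges are spread evenly over $pq$ edges. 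Each of $a$, $b$, $c$ then becomes a sum over $(p,q)$, or over pairs of components in case (c), of the weight $F(m-p,n-q)$ times a polynomial-exponential factor, where $F(m',n')=i(K_{m',n'})$. A cleaner route is to use the spanning-forest matrix-tree theorem: forests of $G$ correspond to spanning trees of $G$ plus an apex vertex joined to every vertex with weight $1$. Then $b^2-ac$ is a Rayleigh difference in the augmented graph, and effective-resistance ideas apply. That argument, however, only yields negative correlation for the weighted spanning-tree measure of the cone, not for uniform forests. So I will stay with the component decomposition.

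The third step, which I expect to be the main obstacle, is proving $b^2-ac>0$ from these sums. I would try an injection. Pairs (forest avoiding $e$ and $f$, forest containing both) should map injectively into pairs of forests each containing exactly one of $e,f$, with at least one pair left uncovered to give strictness. The natural candidate is a swap along the components. Take $F_1\not\ni e,f$ and $F_2\ni e,f$. Exchange the parts of $F_1$ and $F_2$ lying in the component of $F_2$ containing $e$, modified by $e$ and $f$, as in Feder--Mihail-type exchange arguments; the high symmetry of $K_{m,n}$ is what should make the exchange well defined. If the injection breaks down, the fallback is analytic. In case (a), with common vertex $u\in L$, the quantities depend only on whether $v$ and $v'$ lie in the same component as $u$. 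One can write $a,b,c$ in terms of a univariate generating function in the size of $u$'s component and prove log-concavity by a Cauchy--Schwarz argument over the component sizes. Strictness will come from a single explicit contributing configuration; the small cases $m=n=2$ can be checked directly ($K_{2,2}=C_4$ gives $i=15$, $i_e=7$, $i_{ef}=3$, and $49>45$).
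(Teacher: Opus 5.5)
Your reformulation is correct. With $a=i(G-\{e,f\})$, $b=i_e(G-f)$ and $c=i_{ef}(G)$ one has $(b+c)^2-c(a+2b+c)=b^2-ac$, and your $K_{2,2}$ check ($49>45$) is right. But $b^2>ac$ is the whole theorem, and the proposal gives no argument for it.

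The injection in your third step is never constructed, and you do not say where the symmetry of $K_{m,n}$ enters. A swap along components that used no such symmetry would settle the forest conjecture for all graphs, which is open. Feder--Mihail exchanges work for bases of balanced matroids, such as spanning trees, not for forests.

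The analytic fallback fails for a concrete reason. Decompose by the vertex set $S$ of the component of $u$, with $p$ vertices in $L$ and $q$ in $R$. This writes $a=\sum_Sa_S$, $b=\sum_Sb_S$, $c=\sum_Sc_S$ with weights $i(K_{m-p,n-q})$, for which you have neither formulas nor ratio bounds. Log-concavity is not preserved under such sums: $b^2-ac=\sum_{S,S'}(b_Sb_{S'}-a_Sc_{S'})$ contains cross terms. Termwise bounds $b_S^2\geq a_Sc_S$ only give $b\geq\sum_S\sqrt{a_Sc_S}$, and Cauchy--Schwarz bounds that quantity \emph{above} by $\sqrt{ac}$, not below. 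Probabilistically, given $S$ the component of $u$ is a uniform spanning tree of $K_{p,q}$, so the two edge indicators are conditionally negatively correlated. Mixing over $S$, however, adds $\mathrm{Cov}_S\bigl(\mathbb P(e\in F\mid S),\mathbb P(f\in F\mid S)\bigr)$, which need not be nonpositive. You would need a quantitative estimate showing that the conditional term dominates it.

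Your fallback also treats only adjacent edges, whereas disjoint edges are the decisive case. Comparing the component partitions of $\{a,b,c,d\}$ after deleting the two edges shows $i_{ab,cd}\geq i_{ab,ad}$ and $i_{ab,cd}\geq i_{ab,cb}$, so disjoint pairs have the smallest correlation gap. This is also the case Stark's asymptotic method left open.

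The paper settles it by exact enumeration. A signed count of digraphs with outdegree at most one gives an explicit alternating-sum formula for the number $f_{p,q}$ of forests in which each component meets a prescribed vertex set at most once. Path deletion gives $h_{\pi_{cd}}(1)=g_{r,s}$. Recurrences then express $i$, $i_e$ and $i_{ef}$ for disjoint $e,f$ through the two numbers $g_{r,s}$ and $f_{r,s}$, where $r=m-2$ and $s=n-2$. The correlation gap becomes an explicit quadratic form in these two numbers, positive because $1\leq f_{r,s}\leq g_{r,s}$ and $(mn+6)f_{r,s}\leq2(m+n+1)g_{r,s}$.

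What your outline is missing is input of this strength, collapsing the many unknown weights to a few controllable ones. An alternative would be a single centring parameter that works for every conditioning class, as in the paper's connected-subgraph argument.
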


Our second theorem gives the corresponding result for connected
spanning subgraphs.

\begin{theorem}\label{thm:main-connected}
For integers $m,n\geq2$ and distinct edges $e,f$ of $K_{m,n}$,
\[
 j_e(K_{m,n})j_f(K_{m,n})-j_{ef}(K_{m,n})j(K_{m,n})>0.
\]
\end{theorem}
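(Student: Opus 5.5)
By the symmetry of $K_{m,n}$, only two cases of the pair $\{e,f\}$ need to be treated for Theorem~\ref{thm:main-connected}. Either $e$ and $f$ share an endpoint, or they are disjoint; if they share an endpoint, that endpoint lies in $L$ or in $R$, and swapping $m,n$ reduces this to one subcase. In the disjoint case $e=\{u,v\}$, $f=\{u',v'\}$ with $u,u'\in L$ and $v,v'\in R$, so $e,f$ are two opposite edges of the $4$-cycle on $\{u,u',v,v'\}$. In the adjacent case $e=\{u,v\}$, $f=\{u,v'\}$, and again both edges lie in the copy of $K_{2,2}$ on $\{u,u',v,v'\}$ for any choice of $u'\in L\setminus\{u\}$. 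So in both cases $e,f$ are distinct edges of a fixed $K_{2,2}$ with vertex set $Q=\{u,u',v,v'\}$. This suggests the conditioning strategy announced in the abstract: fix the restriction $H$ of the random connected subgraph to the edges of $K_{m,n}$ not inside $Q$. That restriction consists of the edges induced by $V\setminus Q$ together with the edges between $Q$ and $V\setminus Q$.

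Given $H$, whether a choice $S\subseteq E(K_{2,2})$ completes $H$ to a connected spanning subgraph depends only on the partition $\pi_H$ of $Q$ induced by the components of $H$. It also requires that every component of $H$ meets $Q$; otherwise no $S$ works. So write $j=\sum_H N(\pi_H)$, $j_e=\sum_H N_e(\pi_H)$, and so on, where $N_\bullet(\pi)$ counts subsets $S$ of the four edges of the $4$-cycle that contain the prescribed edges and connect the blocks of $\pi$ (the blocks being viewed as contracted vertices). The partitions $\pi$ range over the partitions of $Q$, of which there are few, so every $N_\bullet(\pi)$ can be tabulated by hand. The target inequality is then
\[
\sum_{H,H'}\Bigl(N_e(\pi_H)N_f(\pi_{H'})-N_{ef}(\pi_H)N(\pi_{H'})\Bigr)>0 .
\]
I would symmetrize it over the pair $(H,H')$, which turns it into a sum of $2\times2$ determinant-like terms
\[
D(\pi,\pi')=N_e(\pi)N_f(\pi')+N_e(\pi')N_f(\pi)-N_{ef}(\pi)N(\pi')-N_{ef}(\pi')N(\pi)
\]
weighted by $w(\pi)w(\pi')$, where $w(\pi)$ is the number of valid $H$ with $\pi_H=\pi$.

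The first step is the local statement that each single conditional measure is negatively correlated: $D(\pi,\pi)\geq0$ for every $\pi$, with strict inequality for some $\pi$ that actually occurs. This is the "remain strictly negatively correlated after conditioning" claim, checked by the finite table. For instance, with $\pi$ the partition into singletons, the connecting sets are the spanning trees of $C_4$ and the $4$-cycle itself. This gives $N=5$, $N_e=N_f=4$ and $N_{ef}=3$, so $16-15>0$.

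The main obstacle is the cross terms $D(\pi,\pi')$ for $\pi\neq\pi'$, which need not be nonnegative. To handle them, I would first look for a product or ordering structure on the table: if the ratios $N_e/N$, $N_f/N$ and $N_{ef}/N$ are monotone along a chain of partitions (for example, coarser partitions make each edge less necessary), then a Chebyshev-type rearrangement inequality gives the global inequality from the local ones. If that fails, I would instead compute the weights $w(\pi)$ explicitly for $K_{m,n}$ minus the $K_{2,2}$, by counting connected-component structures via inclusion--exclusion, and compare the negative cross terms directly with the positive diagonal mass, aiming for a bound valid for all $m,n\geq2$. Small cases such as $m=n=2$ would be verified directly.
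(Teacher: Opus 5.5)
There is a genuine gap. Your decomposition is valid bookkeeping, but the step you leave open, controlling the cross terms, is the whole difficulty, and the Chebyshev tool you propose for it points the wrong way.

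Take the disjoint pair $e=uv$, $f=u'v'$. For the one-block partition $\hat{1}$ every $S$ works, so $N=16$, $N_e=N_f=8$, $N_{ef}=4$. The conditional measure is then a product measure, and $D(\hat{1},\hat{1})=0$. For the singleton partition $\pi_0$ you correctly get $N=5$, $N_e=N_f=4$, $N_{ef}=3$, so $D(\pi_0,\pi_0)=2$. However, $D(\hat{1},\pi_0)=32+32-20-48=-4$. Restricted to these two classes, your symmetrized sum is $w(\pi_0)\bigl(w(\pi_0)-4w(\hat{1})\bigr)$. This is negative whenever $w(\pi_0)<4w(\hat{1})$, for instance for $m=n$ large, where almost every restriction is connected.

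So the local inequalities carry almost no weight: they are equalities on the dominant class. The sign of the total is decided by the relative weights of classes such as $\{u,v\}|\{u'\}|\{v'\}$, where $N_e/N=1/2\ne 3/4=N_f/N$.

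A Chebyshev argument cannot supply this. Both $N_e/N$ and $N_f/N$ decrease under coarsening, from $4/5$ at $\pi_0$ to $1/2$ at $\hat{1}$, so they are similarly ordered. The rearrangement inequality then makes the between-class covariance of the conditional marginals nonnegative, which pushes towards positive correlation. Your fallback, computing all $w(\pi)$ for general $m,n$, amounts to enumerating spanning subgraphs of $K_{m,n}$ by the partition they induce on $Q$. That is no easier than the original problem, and you give no plan for the resulting sign analysis.

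The missing idea is a common centering parameter, and it requires a coarser conditioning than yours.

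\begin{itemize}
\item The paper first reduces to disjoint pairs via the coefficientwise comparison $j_{ab,cd}(K_{m,n};t)\succeq j_{ab,ad}(K_{m,n};t)$, so adjacent pairs need no separate treatment. It settles $m=2$ or $n=2$ by direct counting.
\item For $m,n\geq3$ it conditions only on $H=G-W$, the subgraph on the vertices outside $W=\{a,b,c,d\}$, leaving every edge incident with $W$ free.
\item It fixes the single parameter $\theta=1/(2-2^{1-m}-2^{1-n})$, read off from the case of connected $H$. It then proves $j_{ef}^H-2\theta j_e^H+\theta^2 j^H<0$ for every $H$. The tools are M\"obius inversion over the partitions of $W$, which turns the extension counts into products over the components of $H$, followed by componentwise product estimates.
\item Summing over $H$ and using $j_e j_f-j\,j_{ef}=(j_e-\theta j)^2-j\,(j_{ef}-2\theta j_e+\theta^2 j)$ (arguments $K_{m,n}$ suppressed) disposes of all cross terms at once.
\end{itemize}

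This device cannot be transplanted to your finer decomposition. The centered quantity $N_{ef}-2\theta N_e+\theta^2 N$ equals $4(1-2\theta)^2$ at $\hat{1}$, which forces $\theta=1/2$. But at $\pi_0$ it equals $3-8\theta+5\theta^2=1/4>0$ for $\theta=1/2$. For the same reason, your local statement is not the paper's conditional corollary, which conditions only on the subgraph induced by the vertices outside $W$.
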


If either part has size one, every spanning subgraph is a forest, so
the edge indicators are independent under the uniform forest measure.
The only connected spanning subgraph is $K_{m,n}$ itself.
Hence, both correlation differences are zero.

Negative correlation for uniform matroid bases was studied by Seymour
and Welsh \cite{SeymourWelsh1975}, and Feder and Mihail
\cite{FederMihail1992} introduced balanced matroids.
For uniform forests, Grimmett and Winkler
\cite[Theorem~1.3]{GrimmettWinkler2004} verified the conjecture
for graphs with at most eight vertices and for nine-vertex graphs with
at most eighteen edges. For arbitrary graphs, Br\"and\'en and Huh
\cite[Propositions~4.21 and~4.25]{BrandenHuh2020} proved
$i_{ef}(G)i(G)\leq2i_e(G)i_f(G)$. However, the conjecture requires the factor $1$.

For complete graphs, Stark \cite{Stark2011} proved the forest conjecture
for sufficiently large orders, and Gupta
\cite[Theorem~1.2]{Gupta2026} established it for every $K_n$ with $n\geq2$.
For complete bipartite graphs, Stark \cite[Theorem~1]{Stark2013}
obtained an asymptotic formula for $i(K_{m,n})$ as $m\to\infty$,
with $m\leq n$ and $n=o(m^{6/5})$.
In an earlier preprint, Stark \cite[Theorem~3]{StarkBipartite2011}
proved strict negative correlation for adjacent edges as $m\to\infty$,
with $m\leq n$ and $n=o(m^{8/7})$, leaving the disjoint-edge case unresolved.
For enumeration, Stones \cite{Stones2014} gives an algorithm for counting
forests of $K_{m,n}$ with a prescribed number of edges.
Other forest results concern adjacent edges at sufficiently large common edge
weights on each fixed finite connected graph
\cite[Theorem~1]{Huang2024} and strict negative correlation for uniform
forests on wheel graphs \cite[Theorem~1.1]{BadoWheel2026}.

For connected spanning subgraphs, Tang and Zhang
\cite[Part~I, Theorem~1.4]{TangZhang2026} established the uniform
inequality for sufficiently large complete graphs.
\c{C}i\c{c}eksiz and Ravichandran
\cite[Corollary~1.10]{CiceksizRavichandran2026}
proved the uniform inequality for complete graphs of every order.
The random-cluster result of Tang and Zhang for high-degree regular graphs
\cite[Theorem~1.2, Corollary~6.2 and Section~6.3]{TangZhangRegular2026}
includes the uniform connected-subgraph measure on $K_{n,n}$ for
sufficiently large $n$.
This regular-graph result does not cover $K_{m,n}$ with $m\ne n$.

Weighted versions are known for several graph classes. Semple and Welsh
\cite[Proposition~3.7 and Theorem~4.4]{SempleWelsh2008}
proved the forest and connected-subgraph inequalities for graphs obtained
by series and parallel extensions from graphs whose maximal $2$-connected subgraphs are $K_4$
or its minors; this class includes $K_{2,n}$.
Wagner \cite[Theorem~5.8(b),(c) and Example~5.2]{Wagner2008} established
preservation under two-sums and reported a computational verification
of the weighted forest inequality for $K_{3,4}$.
Erickson \cite[Theorem~1.4]{Erickson2012} expressed the unnormalized forest
correlation difference for series-parallel graphs as a sum of monomials times squares.

For $K_{3,3}$, Erickson's calculation for the cographic matroid
\cite[Section~5]{Erickson2012}, together with
\cite[Proposition~3.3]{SempleWelsh2008}, implies negative correlation
for the connected-subgraph measure under arbitrary positive edge weights.
At unit weights, Erickson's basis lower bound equals the unnormalized
spanning-tree correlation difference for $K_{3,3}$, which is positive
for every edge pair.
Thus, the uniform connected-subgraph inequality is strict.
These results already cover negative correlation for
connected spanning subgraphs of $K_{2,n}$ and $K_{3,3}$.

The paper is organized as follows. {\autoref{Sec-2} proves that
two-edge inclusion counts are largest for disjoint edge pairs, even with
the total number of edges fixed. Edge transitivity then reduces both main
theorems to these pairs.}
In \autoref{sec:forests}, we enumerate forests subject to
component restrictions by a signed count of directed graphs.
Edge-deletion and path-deletion bijections, together with recurrence
relations, 
give formulas
for the edge-inclusion counts. These formulas reduce the proof of
\autoref{thm:main-forest} to showing that an explicit expression is positive.
In \autoref{sec:connected}, we group connected spanning subgraphs by
their restriction outside the endpoints of two disjoint edges.
M\"obius inversion expresses a weighted sum of extension counts
in terms of products over the components of the restriction.
Product inequalities for its components yield an inequality for the extension counts with a
parameter independent of the fixed subgraph. Summing these
inequalities proves \autoref{thm:main-connected} for $m,n\geq3$.
The cases $m=2$ or $n=2$ follow from direct counting.
{Together with the comparisons in \autoref{Sec-2}, these estimates give
strict negative correlation between any two distinct edges of a fixed
copy of $K_{2,2}$ after conditioning on the subgraph induced by the
remaining vertices; see \autoref{cor:conditional-connected}.}

\section{Reduction to disjoint edge pairs}\label{Sec-2}
We reduce both main theorems to disjoint edge pairs by comparing joint
edge-inclusion counts for the two uniform measures.

Write $uv$ for the edge $\{u,v\}$.
For $U\subseteq V(G)$, the graph $G-U$ is obtained by deleting $U$ and
all incident edges.
The induced subgraph $G[U]$ has vertex set $U$ and all edges
of $G$ with both endpoints in $U$; the same convention applies to
digraphs, with arcs in place of edges.
For an edge set $X$ whose endpoints lie in $V(G)$, define
\[
G-X\coloneqq(V(G),E(G)\setminus X),\qquad G+X\coloneqq(V(G),E(G)\cup X);
\]    
we abbreviate $G\pm\{e\}$ to $G\pm e$.
Let $\Comp(G)$ denote the set of connected components of $G$, including
isolated vertices, and put $\omega(G)\coloneqq|\Comp(G)|$.
Two edges are disjoint if they have no common endpoint.
In subscripts, commas separate distinct edges written by their endpoints;
for example, $j_{ab,cd}(G)=j_{\{ab,cd\}}(G)$.

The automorphisms of $K_{m,n}$ that preserve $L$ and $R$ act transitively
on edges. Hence, 
\begin{equation}
i_e(K_{m,n}) = i_f(K_{m,n}) \quad \text{and} \quad j_e(K_{m,n}) = j_f(K_{m,n})
\quad \text{for all distinct edges } e,f.
\label{eq:equal-edge-counts}
\end{equation}
A pair of distinct edges either has a common
end in $L$, has a common end in $R$, or is disjoint.
Therefore, to reduce both main theorems to disjoint edge pairs, it suffices
to show that two-edge inclusion counts are largest for disjoint pairs.
We prove the stronger coefficientwise comparisons for forests
and connected spanning subgraphs; evaluating them at $t=1$ gives
the inclusion-count comparisons needed for the reduction.

For $X\subseteq E(K_{m,n})$, we consider
\[
 i_X(K_{m,n};t)\coloneqq\sum_{F\in\mathcal F_X(K_{m,n})}t^{|E(F)|},\qquad
 j_X(K_{m,n};t)\coloneqq\sum_{H\in\mathcal C_X(K_{m,n})}t^{|E(H)|}.
\]
For polynomials $f,g\in\mathbb R[t]$, write $f\succeq g$ if every
coefficient of $f-g$ is nonnegative.

\begin{proposition}\label{prop:twin-comparison}
Let $a,c\in L$ and $b,d\in R$, with $a\ne c$ and $b\ne d$. Then
\[
 i_{ab,cd}(K_{m,n};t)\succeq i_{ab,ad}(K_{m,n};t),\qquad
 i_{ab,cd}(K_{m,n};t)\succeq i_{ab,cb}(K_{m,n};t).
\]
\end{proposition}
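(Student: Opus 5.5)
The plan is to build, for each $k$, an injection from forests in $\mathcal F_{ab,ad}(K_{m,n})$ with $k$ edges into forests in $\mathcal F_{ab,cd}(K_{m,n})$ with $k$ edges. This gives $i_{ab,cd}(K_{m,n};t)\succeq i_{ab,ad}(K_{m,n};t)$. The second comparison then follows from the first by the automorphism of $K_{m,n}$ exchanging the roles of $L$ and $R$, that is, by relabelling the adjacent pair $ab,cb$ as a pair sharing its end in the other part.

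The key tool is that $a$ and $c$ are twins: both have neighbourhood $R$. So the transposition $\tau$ of $a$ and $c$ is an automorphism of $K_{m,n}$, and it preserves acyclicity and edge counts. Fix $F\in\mathcal F_{ab,ad}$ and split into two cases according to whether $c$ and $d$ are connected in $F-ad$.

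\emph{Case 1: $c$ and $d$ are in different components of $F-ad$.} Set $\Phi(F)=F-ad+cd$. This is a forest, because adding $cd$ joins two components. It has the same number of edges and contains $ab,cd$.

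\emph{Case 2: $c$ and $d$ are connected in $F-ad$.} Then $a,c,d$ lie in one tree of $F$, and the $c$--$b$ path in $F$ runs $c\cdots d\,a\,b$. Set $\Phi(F)=\tau(F)-cb+ab$. Here $\tau(F)$ is a forest containing $cb$ and $cd$. Its unique $a$--$b$ path is the $\tau$-image of the $c$--$b$ path of $F$, namely $a\cdots d\,c\,b$, and this path uses $cb$. So $a$ and $b$ are disconnected in $\tau(F)-cb$, and adding $ab$ keeps the graph acyclic. Moreover $cd=\tau(ad)$ survives in $\Phi(F)$, so $\Phi(F)\in\mathcal F_{ab,cd}$ with the same edge count.

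Injectivity within each case is immediate, since each map is invertible given the case. Case 1 is undone by $G\mapsto G-cd+ad$. Case 2 is undone by $G\mapsto\tau(G-ab+cb)$.

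The main point to check is that the two images are disjoint. I would separate them with the invariant ``are $a$ and $d$ connected in $G-cd$?''
\begin{itemize}
\item In Case 1 we have $G-cd=F-ad$. Since $F$ is a forest containing $ad$, the vertices $a$ and $d$ are disconnected there.
\item In Case 2, take the $c$--$d$ path of $F-ad$. It avoids $a$, because any path from $c$ through $a$ to $d$ would have to use $ad$ or create a cycle through $ad$. Its $\tau$-image is an $a$--$d$ path in $\tau(F)$ that avoids $c$, hence avoids $cd$ and $cb$. So this path lies in $G-cd$, and $a$ and $d$ are connected there.
\end{itemize}
Hence the images are disjoint and $\Phi$ is an edge-count-preserving injection. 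The careful verification of this separating invariant, together with the acyclicity claim in Case 2 (uniqueness of paths in the tree containing $a,c,d$), is where I expect the only real work to lie.
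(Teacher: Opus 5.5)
Your proof is correct and is essentially the paper's argument in bijective form. With $H=F-\{ab,ad\}$, your Case~1 is exactly the classes $\pi(H)\in\{\pi_0,\pi_{ac},\pi_{bc}\}$, on which $\Phi(F)-\{ab,cd\}=H$, and your Case~2 is the class $\pi_{cd}$, which the swap of $a$ and $c$ carries onto $\pi_{ad}$ (the paper's identity $h_{\pi_{ad}}(t)=h_{\pi_{cd}}(t)$). So the forests your map misses are exactly those with $\pi(G-\{ab,cd\})=\pi_{bd}$, contributing $t^2h_{\pi_{bd}}(t)$, the paper's exact difference; the paper's class-count version also gives \eqref{eq:forest-component-counts}, which it reuses in \autoref{lem:three-forest-counts}.

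One small fix: for $m\ne n$ there is no automorphism of $K_{m,n}$ exchanging $L$ and $R$. Justify the second comparison instead by rerunning your argument with the twins $b,d\in R$, or equivalently by applying the first comparison with the two parts relabelled. This is legitimate because your argument used only that $a$ and $c$ are twins.
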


\begin{proof}
Let $X$ be one of $\{ab,ad\}$, $\{ab,cb\}$, and $\{ab,cd\}$.
For $H\in\mathcal F(K_{m,n}-X)$, $H+X$ is a forest if and only if
each edge of $X$ joins distinct components of $H$ and the endpoints of
the two edges occupy at least three components of $H$.
Indeed, two edges joining distinct components create a cycle precisely
when they join the same pair of components, which is equivalent to their
endpoints occupying only two components.

These conditions depend only on which of $a,b,c,d$ lie in the same
component of $H$. Accordingly, for $F\in\mathcal F(K_{m,n})$, let
$\pi(F)$ be the
partition of $\{a,b,c,d\}$ in which two vertices lie in the same block
if and only if they belong to the same component of $F$. Put
\[
 \pi_0\coloneqq\{\{a\},\{b\},\{c\},\{d\}\},\qquad
 \pi_{xy}\coloneqq\{\{x,y\}\}\cup
          \{\{z\}:z\in\{a,b,c,d\}\setminus\{x,y\}\}
\]
for distinct $x,y\in\{a,b,c,d\}$, and define
\[
 \mathcal H_\pi\coloneqq\{F\in\mathcal F(K_{m,n}):\pi(F)=\pi\},\qquad
 h_\pi(t)\coloneqq\sum_{F\in\mathcal H_\pi}t^{|E(F)|}.
\]

Suppose $H+X$ is a forest. For either adjacent pair $X$, the three
endpoints of $X$ lie in distinct components of $H$.
The remaining vertex of $\{a,b,c,d\}$ may lie in one of these components
or in a fourth.
Thus, $\pi(H)\in\{\pi_0,\pi_{ac},\pi_{bc},\pi_{cd}\}$ for $X=\{ab,ad\}$,
and $\pi(H)\in\{\pi_0,\pi_{ad},\pi_{bd},\pi_{cd}\}$ for $X=\{ab,cb\}$.
For $X=\{ab,cd\}$, the two component conditions give
$\pi(H)\in\{\pi_0,\pi_{ac},\pi_{ad},\pi_{bc},\pi_{bd}\}$. Conversely, if $\pi$ is one of the partitions listed for $X$, then
every $H\in\mathcal H_\pi$ lies in $\mathcal F(K_{m,n}-X)$ and $H+X$
is a forest. Thus, deleting $X$ gives a bijection from $\mathcal F_X(K_{m,n})$
onto the disjoint union of $\mathcal H_\pi$ over the partitions $\pi$
listed for $X$, with inverse $H\mapsto H+X$.
For each partition $\pi$ listed for $X$, this bijection gives
\[
 \sum_{F\in\mathcal F_X(K_{m,n}),\pi(F-X)=\pi}t^{|E(F)|}
 =t^2h_\pi(t),
\]
since $|E(H+X)|=|E(H)|+2$ for every $H\in\mathcal H_\pi$.
The automorphisms interchanging $a,c$ and $b,d$ give
$h_{\pi_{ad}}(t)=h_{\pi_{cd}}(t)$ and
$h_{\pi_{bc}}(t)=h_{\pi_{cd}}(t)$, respectively.
Therefore,
\begin{equation}
 \begin{aligned}
 i_{ab,ad}(K_{m,n};t)&=t^2\bigl(h_{\pi_0}(t)+h_{\pi_{ac}}(t)+2h_{\pi_{cd}}(t)\bigr),\\
 i_{ab,cb}(K_{m,n};t)&=t^2\bigl(h_{\pi_0}(t)+h_{\pi_{bd}}(t)+2h_{\pi_{cd}}(t)\bigr),\\
 i_{ab,cd}(K_{m,n};t)&=t^2\bigl(h_{\pi_0}(t)+h_{\pi_{ac}}(t)+h_{\pi_{bd}}(t)+2h_{\pi_{cd}}(t)\bigr).
 \end{aligned}
 \label{eq:forest-component-counts}
\end{equation}
Hence,
\[
 i_{ab,cd}(K_{m,n};t)-i_{ab,ad}(K_{m,n};t)=t^2h_{\pi_{bd}}(t),\qquad
 i_{ab,cd}(K_{m,n};t)-i_{ab,cb}(K_{m,n};t)=t^2h_{\pi_{ac}}(t).
\]
Both right-hand sides have nonnegative coefficients.
\end{proof}

\begin{proposition}\label{prop:connected-pair-comparison}
Let $a,c\in L$ and $b,d\in R$, with $a\ne c$ and $b\ne d$. Then
\[
 j_{ab,cd}(K_{m,n};t)\succeq j_{ab,ad}(K_{m,n};t),\qquad
 j_{ab,cd}(K_{m,n};t)\succeq j_{ab,cb}(K_{m,n};t).
\]
\end{proposition}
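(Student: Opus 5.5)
The plan is to mirror the proof of \autoref{prop:twin-comparison}, with one change. A connected subgraph may contain the edges of the $4$-cycle $C=\{ab,ad,cb,cd\}$ beyond $X$, so I would strip off all four cycle edges rather than only $X$. Put $G_0\coloneqq K_{m,n}-C$. Let $X$ be one of $\{ab,ad\}$, $\{ab,cb\}$, $\{ab,cd\}$, and let $C\setminus X$ be the two remaining cycle edges. Every $F\in\mathcal C_X(K_{m,n})$ decomposes uniquely as $F=H+X+Y$, where $H$ is a spanning subgraph of $G_0$ and $Y\subseteq C\setminus X$. Conversely, each such pair $(H,Y)$ gives a spanning subgraph containing $X$, and $|E(F)|=|E(H)|+2+|Y|$.

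The key observation is the criterion for $H+X+Y$ to be connected. This holds if and only if two conditions are met. First, every component of $H$ meets $\{a,b,c,d\}$. Second, the quotient multigraph, whose vertices are the blocks of $\pi(H)$ and whose edges are the images of $X\cup Y$, is connected. Here $\pi(H)$ is the partition of $\{a,b,c,d\}$ into components, as before. So, for each partition $\pi$ of $\{a,b,c,d\}$, define
\[
 g_\pi(t)\coloneqq\sum t^{|E(H)|},
\]
the sum over spanning subgraphs $H$ of $G_0$ with $\pi(H)=\pi$ and every component meeting $\{a,b,c,d\}$. Also define $P_X(\pi;t)\coloneqq\sum_Y t^{|Y|}$, the sum over those $Y\subseteq C\setminus X$ for which the quotient of $X\cup Y$ by $\pi$ is connected. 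Then
\[
 j_X(K_{m,n};t)=t^2\sum_\pi g_\pi(t)\,P_X(\pi;t).
\]
Since every $g_\pi$ has nonnegative coefficients, it suffices to show that $\sum_\pi g_\pi(P_{\{ab,cd\}}-P_{\{ab,ad\}})\succeq0$, and similarly for $\{ab,cb\}$.

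Next, I would use the automorphisms of $K_{m,n}$ that swap $a\leftrightarrow c$ or $b\leftrightarrow d$. They preserve $G_0$ and $C$, so $g_\pi=g_{\sigma\pi}$ for $\sigma$ in this Klein four-group. This lets me group the fifteen partitions into orbits. It also lets me replace $P_X(\pi)$ by its orbit sum, since the comparison need not hold partition by partition. I would then check orbit by orbit that the disjoint pair beats the adjacent pair. A few sample computations, taking $X=\{ab,ad\}$ against $\{ab,cd\}$:
\begin{itemize}
\item For $\pi_0$, both sides equal $2t+t^2$.
\item For $\pi_{ac}$, both sides equal $(1+t)^2$.
\item For $\pi_{bd}$, the adjacent count is $2t+t^2$, while the disjoint count is $1+2t+t^2$, which is larger.
\end{itemize}
Partitions with a single block give $(1+t)^2$ on both sides. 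I expect the remaining orbits, the two-block partitions and those with a block of size three, to be equal on both sides or to favour the disjoint pair. The second inequality, with $\{ab,cb\}$, follows by the same table with the roles of $L$ and $R$ exchanged, that is, with $\pi_{ac}$ and $\pi_{bd}$ swapped.

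The main obstacle is this finite case check. It is mechanical, but it must be done carefully over all fifteen partitions. If some individual partition favours the adjacent pair, the fix is to pair it with its image under $a\leftrightarrow c$ or $b\leftrightarrow d$ and compare orbit sums, using $g_\pi=g_{\sigma\pi}$.
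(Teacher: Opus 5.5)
There is a genuine gap. Your decomposition $j_X(K_{m,n};t)=t^2\sum_\pi g_\pi(t)P_X(\pi;t)$ and the connectivity criterion behind it are correct. The trouble is the finite check you postpone: it does not come out as you expect, and regrouping by the Klein four-group cannot repair it.

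Take the orbit $\{ab|cd,\ ad|bc\}$. The swap $a\leftrightarrow c$ exchanges its members, so $g_{ab|cd}=g_{ad|bc}$.
\begin{itemize}
\item For $\pi=ab|cd$, both edges of $\{ab,cd\}$ become loops in the quotient, so $P_{\{ab,cd\}}(\pi;t)=2t+t^2$. But $ad$ already joins the two blocks, so $P_{\{ab,ad\}}(\pi;t)=(1+t)^2$.
\item For $\pi=ad|bc$, both values equal $(1+t)^2$.
\end{itemize}
So this orbit sum favours the \emph{adjacent} pair by $1$. Completing the table, the orbit $\{a|bcd,\ c|abd\}$ and the fixed partition $\pi_{bd}=bd|a|c$ each favour the disjoint pair by $1$, and every other orbit ties. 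Hence
\[
 j_{ab,cd}(K_{m,n};t)-j_{ab,ad}(K_{m,n};t)
 =t^2\bigl(g_{bd|a|c}(t)+g_{a|bcd}(t)-g_{ab|cd}(t)\bigr).
\]
Moreover, $g_{ab|cd}\ne0$ once $m,n\geq4$. To see this, take the paths $af_1e_1b$ and $cf_2e_2d$ with $e_i\in L\setminus\{a,c\}$ and $f_i\in R\setminus\{b,d\}$, and hang every remaining vertex on $a$ or $b$. Symmetry only equates $g_\pi$ within an orbit, so no bookkeeping over the fifteen partitions can prove the proposition. You need an inequality between $g_\pi$ for partitions in different orbits. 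That inequality depends on the global structure of $K_{m,n}$, not just on the $4$-cycle $C$.

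The missing inequality is true, for instance $g_{a|bcd}\succeq g_{ab|cd}$, but proving it needs the idea in the paper's proof. Pass to $H'=H-\{b,d\}$ and count how $b$ and $d$ attach to the components of $H'$. Suppose $a$ and $c$ lie in distinct components of $H'$. Let $\lambda_a,\lambda_c$ be the numbers of vertices of $L\setminus\{a,c\}$ in those components, and put $u=(1+t)^{\lambda_a}$ and $v=(1+t)^{\lambda_c}$. Define $\psi$ and $\psi^{\mathrm{sep}}$ over the other components as in the paper. Then, up to the factor $t^{|E(H')|}$, $H'$ contributes $(u-1)(v-1)\psi^{\mathrm{sep}}$ to $g_{ab|cd}$ and $(v^2-1)\psi-2(v-1)\psi^{\mathrm{sep}}$ to $g_{a|bcd}$.

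For a single $H'$ this difference can be negative. Averaging over $a\leftrightarrow c$, however, turns it into
\[
 \tfrac12\bigl[(u^2+v^2-2)(\psi-\psi^{\mathrm{sep}})+(u-v)^2\psi^{\mathrm{sep}}\bigr]\succeq0.
\]
This is essentially the paper's computation of $2\varphi_{a,c}-\varphi_{a,a}-\varphi_{c,c}$. Once you add it, the partition decomposition over $G_0=K_{m,n}-C$ is an extra layer on top of the paper's argument rather than an independent route.
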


\begin{proof}
For the first inequality, both edge pairs contain one edge incident with
$b$ and one incident with $d$. Therefore, we group connected spanning
subgraphs by their restriction after deleting $b,d$.
For each spanning subgraph $H$ of $K_{m,n}-\{b,d\}$ and
$u_1,u_2\in\{a,c\}$, define
\begin{equation}
 \varphi_{u_1,u_2}(H;t)\coloneqq\sum_{G\in\mathcal C(K_{m,n}),\ G-\{b,d\}=H,\ bu_1,du_2\in E(G)}t^{|E(G)|-|E(H)|}.
 \label{eq:phi-def}
\end{equation}
Every $G\in\mathcal C(K_{m,n})$ has a unique restriction
$H=G-\{b,d\}$. Summing over all spanning subgraphs $H$ of
$K_{m,n}-\{b,d\}$ gives
\[
 j_{ab,ad}(K_{m,n};t)=\sum_Ht^{|E(H)|}\varphi_{a,a}(H;t),\qquad
 j_{ab,cd}(K_{m,n};t)=\sum_Ht^{|E(H)|}\varphi_{a,c}(H;t).
\]
The automorphism interchanging $a,c$ permutes the restrictions $H$ and
preserves edge counts. Hence,
$\sum_Ht^{|E(H)|}\varphi_{a,a}(H;t)=
\sum_Ht^{|E(H)|}\varphi_{c,c}(H;t)$.
Averaging these equal expressions yields
\begin{equation}
 2\bigl(j_{ab,cd}(K_{m,n};t)-j_{ab,ad}(K_{m,n};t)\bigr)
 =\sum_H t^{|E(H)|}
        \bigl(2\varphi_{a,c}(H;t)-\varphi_{a,a}(H;t)-\varphi_{c,c}(H;t)\bigr).
 \label{eq:connected-pair-comparison}
\end{equation}
It suffices to prove
$2\varphi_{a,c}(H;t)-\varphi_{a,a}(H;t)-\varphi_{c,c}(H;t)\succeq0$
for every $H$.

Fix $H$. For $U\in\Comp(H)$ and $v\in\{b,d\}$, put
\[
 E_v(U)\coloneqq\{vu:u\in V(U)\cap L\},\qquad
 \ell_U\coloneqq|V(U)\cap L|.
\]
A spanning subgraph $G$ with $G-\{b,d\}=H$ has the form
$G=(L\cup R,E(H)\cup X_G)$ for an edge set
$X_G\subseteq\{bu,du:u\in L\}$. It is connected if and only if
\begin{equation}
 \begin{aligned}
 &X_G\cap\bigl(E_b(U)\cup E_d(U)\bigr)\ne\varnothing
       &&\text{for every }U\in\Comp(H),\\
 &X_G\cap E_b(U)\ne\varnothing,\quad X_G\cap E_d(U)\ne\varnothing
       &&\text{for some }U\in\Comp(H).
 \end{aligned}
 \label{eq:attachment-connectivity}
\end{equation}
If $G$ is connected, every component of $H$ must be joined to
$b$ or $d$. A $b$--$d$ path has its internal vertices in a single
component of $H$, since $b,d$ are not adjacent. Conversely, the first
condition attaches every component to $b$ or $d$, and the second
connects $b$ to $d$.

If $\ell_U=0$ for some $U\in\Comp(H)$, then $E_b(U)=E_d(U)=\varnothing$, so the
first condition cannot hold and all $\varphi_{u_1,u_2}(H;t)$ vanish.
Assume that every $\ell_U\geq1$.
If $a,c\in V(U)$ for some $U\in\Comp(H)$, the required edges
$bu_1,du_2$ satisfy the second condition in
\eqref{eq:attachment-connectivity} for all $u_1,u_2\in\{a,c\}$.
The remaining $2\ell_U-2$ edges of $E_b(U)\cup E_d(U)$ may be chosen
freely, and a nonempty subset of $E_b(T)\cup E_d(T)$ must be chosen
for each other component $T$. Hence,
\[
 \varphi_{a,a}(H;t)=\varphi_{a,c}(H;t)=\varphi_{c,c}(H;t)
 =t^2(1+t)^{2\ell_U-2}
   \prod_{T\in\Comp(H)\setminus\{U\}}\bigl((1+t)^{2\ell_T}-1\bigr).
\]

Now assume that $a,c$ lie in distinct components $U_a,U_c$.
For each other component $U$, the generating polynomial for nonempty
subsets of $E_b(U)\cup E_d(U)$ is $(1+t)^{2\ell_U}-1$.
For nonempty subsets using edges incident with exactly one
of $b,d$, the polynomial is $2((1+t)^{\ell_U}-1)$.
Distinct components use disjoint edge sets, so their generating
polynomials multiply to give
\[
 \psi(t)\coloneqq\prod_{U\in\Comp(H)\setminus\{U_a,U_c\}}
             \bigl((1+t)^{2\ell_U}-1\bigr),\qquad
 \psi^{\mathrm{sep}}(t)\coloneqq\prod_{U\in\Comp(H)\setminus\{U_a,U_c\}}
             2\bigl((1+t)^{\ell_U}-1\bigr),
\]
where an empty product is taken to be $1$.

For $\varphi_{a,a}(H;t)$, the edges $ba,da$ required
in~\eqref{eq:phi-def} satisfy the second
condition in~\eqref{eq:attachment-connectivity}. The remaining edges of
$E_b(U_a)\cup E_d(U_a)$ are unrestricted, and a nonempty subset of
$E_b(U_c)\cup E_d(U_c)$ must be chosen.
The same reasoning gives $\varphi_{c,c}(H;t)$.
For $\varphi_{a,c}(H;t)$, fix $ba,dc\in X_G$ as in~\eqref{eq:phi-def}.
Among edge sets $X_G$ satisfying the first condition, the
second fails exactly when no edge from either $E_d(U_a)$ or $E_b(U_c)$
is chosen and every other component $U$ uses a nonempty subset of
exactly one of $E_b(U)$ and $E_d(U)$.
Including the factor $t^2$ for the two fixed edges gives

\[
 \begin{aligned}
 \varphi_{a,a}(H;t)&=t^2(1+t)^{2\ell_{U_a}-2}
                 \bigl((1+t)^{2\ell_{U_c}}-1\bigr)\psi(t),\\
 \varphi_{c,c}(H;t)&=t^2(1+t)^{2\ell_{U_c}-2}
                 \bigl((1+t)^{2\ell_{U_a}}-1\bigr)\psi(t),\\
 \varphi_{a,c}(H;t)&=t^2(1+t)^{2\ell_{U_a}+2\ell_{U_c}-2}\psi(t)
          -t^2(1+t)^{\ell_{U_a}+\ell_{U_c}-2}\psi^{\mathrm{sep}}(t).
 \end{aligned}
\]
Subtracting the first two lines from twice the third yields
\begin{align*}
 &2\varphi_{a,c}(H;t)-\varphi_{a,a}(H;t)-\varphi_{c,c}(H;t)\\&=t^2\bigl[ ((1+t)^{\ell_{U_a}-1}-(1+t)^{\ell_{U_c}-1})^2\psi(t)+2(1+t)^{\ell_{U_a}+\ell_{U_c}-2} (\psi(t)-\psi^{\mathrm{sep}}(t))\bigr].
\end{align*}
For every $U\in\Comp(H)\setminus\{U_a,U_c\}$,
\[
 ((1+t)^{2\ell_U}-1)-2((1+t)^{\ell_U}-1)
       =((1+t)^{\ell_U}-1)^2\succeq0.
\]
All factors in both products have nonnegative coefficients.
Hence, $\psi(t)\succeq\psi^{\mathrm{sep}}(t)$. Moreover,
\[
 ((1+t)^{\ell_{U_a}-1}-(1+t)^{\ell_{U_c}-1})^2
 =(1+t)^{2(\min\{\ell_{U_a},\ell_{U_c}\}-1)}
      ((1+t)^{|\ell_{U_a}-\ell_{U_c}|}-1)^2\succeq0.
\]
Thus, $2\varphi_{a,c}(H;t)-\varphi_{a,a}(H;t)-\varphi_{c,c}(H;t)\succeq0$ for every $H$.
The first inequality follows by summing over $H$.
The same argument with the roles of $L$ and $R$ exchanged proves the second.
\end{proof}

{At $t=1$, the preceding comparisons and edge transitivity give
the following reduction.}

\begin{corollary}\label{cor:pair-order}
Let $e,f$ be distinct edges and $e',f'$ disjoint edges of
$K_{m,n}$, where $m,n\geq2$. Then
\[
 \begin{aligned}
 i_e(K_{m,n})i_f(K_{m,n})-i_{ef}(K_{m,n})i(K_{m,n})
 &\geq i_{e'}(K_{m,n})i_{f'}(K_{m,n})-i_{e'f'}(K_{m,n})i(K_{m,n}),\\
 j_e(K_{m,n})j_f(K_{m,n})-j_{ef}(K_{m,n})j(K_{m,n})
 &\geq j_{e'}(K_{m,n})j_{f'}(K_{m,n})-j_{e'f'}(K_{m,n})j(K_{m,n}).
 \end{aligned}
\]
\end{corollary}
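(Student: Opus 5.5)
The plan is to evaluate Propositions~\ref{prop:twin-comparison} and~\ref{prop:connected-pair-comparison} at $t=1$ and combine the result with edge transitivity~\eqref{eq:equal-edge-counts}. The argument is identical for $i$ and $j$, so I treat only $i$.

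Begin by observing that $i_X(K_{m,n};1)=i_X(K_{m,n})$ and $j_X(K_{m,n};1)=j_X(K_{m,n})$. If $f\succeq g$, then $f(1)-g(1)$ is a sum of nonnegative coefficients, hence $f(1)\geq g(1)$. By~\eqref{eq:equal-edge-counts}, the products $i_e i_f$ and $i_{e'}i_{f'}$ are both equal to $i_{e_0}(K_{m,n})^2$ for any fixed edge $e_0$. The first terms on the two sides therefore agree. The factor $i(K_{m,n})$ is common and positive. So the claimed inequality is equivalent to $i_{ef}(K_{m,n})\leq i_{e'f'}(K_{m,n})$, and similarly $j_{ef}(K_{m,n})\leq j_{e'f'}(K_{m,n})$.

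Next, split into the three types of distinct pairs listed before Proposition~\ref{prop:twin-comparison}. If $e,f$ share an end in $L$, label $e=ab$ and $f=ad$ with $a\in L$ and $b\neq d$ in $R$. Since $m\geq2$, we may choose $c\in L\setminus\{a\}$. Proposition~\ref{prop:twin-comparison} at $t=1$ then gives $i_{ab,ad}\leq i_{ab,cd}$. If $e,f$ share an end in $R$, write them as $ab,cb$ and pick $d\in R\setminus\{b\}$, which exists because $n\geq2$. The second inequality of the proposition applies. If $e,f$ are disjoint, there is nothing to compare beyond transitivity.

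In every case, $i_{ef}$ is bounded by the count for \emph{some} disjoint pair. It remains to show that all disjoint pairs give the same count. The automorphism group preserving $L$ and $R$ is $S_m\times S_n$, and it acts transitively on ordered disjoint pairs $(ab,cd)$: we can map $a\mapsto a'$, $c\mapsto c'$ and $b\mapsto b'$, $d\mapsto d'$ independently. Such an automorphism preserves $\mathcal F$ and $\mathcal C$, so $i_{ab,cd}=i_{e'f'}$ and $j_{ab,cd}=j_{e'f'}$. This closes the argument.

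The only mildly delicate point is this last transitivity step for disjoint pairs, together with the need for $m,n\geq2$ to supply the auxiliary vertex $c$ or $d$. Neither step is a real obstacle; the corollary is essentially bookkeeping on top of the two propositions.
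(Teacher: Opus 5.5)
Your proposal is correct and follows essentially the same route as the paper. Both arguments evaluate the coefficientwise comparisons of Propositions~\ref{prop:twin-comparison} and~\ref{prop:connected-pair-comparison} at $t=1$, equate the one-edge products via edge transitivity, and identify all disjoint-pair counts through the action of independent permutations of $L$ and $R$. The nonnegativity of $i(K_{m,n})$ and $j(K_{m,n})$ then finishes both inequalities.
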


\begin{proof}
Automorphisms of $K_{m,n}$ obtained by independently permuting
$L$ and $R$ act transitively on edges and on disjoint edge pairs,
and preserve forests and connected subgraphs.
Thus, the one-edge inclusion counts are equal for all edges, 
and all disjoint edge pairs have the same two-edge inclusion counts.
By \autoref{prop:twin-comparison} and
\autoref{prop:connected-pair-comparison} at $t=1$, the two-edge inclusion
counts are largest for disjoint pairs.
Since $i(K_{m,n}),j(K_{m,n})\geq0$, the assertion follows.
\end{proof}

It remains to prove strict negative correlation for one disjoint
edge pair under each measure. We begin with the forest measure.

\section{The uniform forest measure}\label{sec:forests}

We prove \autoref{thm:main-forest} by computing $i(K_{m,n})$,
$i_e(K_{m,n})$, and $i_{ef}(K_{m,n})$ for disjoint edges $e,f$.
Deleting an edge from a forest separates its endpoints. This
suggests counting forests in which each component contains at most one
vertex of a prescribed set. The component-partition identities in
\autoref{prop:twin-comparison} then express the disjoint-edge count
in terms of these forests.
We enumerate these forests by a signed count of directed graphs.

\subsection{Forest enumeration}

Throughout this section, fix $m,n\geq2$.
For $W\subseteq L\cup R$, define
\[
 \mathcal F_{\leq 1}(W)\coloneqq\{F\in\mathcal F(K_{m,n}):
    |V(U)\cap W|\leq1\text{ for every }U\in\Comp(F)\}.
\]
Put $p=|L\setminus W|$, $q=|R\setminus W|$, and
$f_{p,q}\coloneqq|\mathcal F_{\leq 1}(W)|$. Permutations within the two parts show that
this count depends only on $p,q$ while $m,n$ remain fixed.

\begin{proposition}\label{lem:elementary-forest-enumeration}
For integers $p,q$ with $0\leq p\leq m$ and $0\leq q\leq n$,
\[
 f_{p,q}=\sum_{k=0}^{\min\{p,q\}}(-1)^k(k+1)
             \binom pk\binom qk k!(n+1)^{p-k}(m+1)^{q-k}.
\]
\end{proposition}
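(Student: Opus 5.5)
The plan is to realize $f_{p,q}$ as a signed count of functional-type digraphs and then evaluate that count by inclusion–exclusion over cycles. Fix $W$ with $|L\setminus W|=p$ and $|R\setminus W|=q$. Let $\mathcal D$ be the set of digraphs on $L\cup R$ obtained as follows. Each vertex of $L\setminus W$ sends at most one arc to a vertex of $R$, giving $n+1$ choices. Each vertex of $R\setminus W$ sends at most one arc to a vertex of $L$, giving $m+1$ choices. Vertices of $W$ send no arcs. Thus $|\mathcal D|=(n+1)^p(m+1)^q$. Every vertex has out-degree at most one, so each weak component of $D\in\mathcal D$ either contains exactly one sink or contains exactly one directed cycle. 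Since $K_{m,n}$ is bipartite, every cycle has even length $2j$, and $j=1$ means that some edge is traversed in both directions.

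\textbf{Step 1 (signed bijection with forests).} Give each $D\in\mathcal D$ the weight $\prod_C w(C)$ over its directed cycles $C$, where $w(C)=-1$ for a $2$-cycle and $w(C)=0$ for a cycle of length at least $4$. I claim that the total weight equals $f_{p,q}$, and I would prove it by grouping digraphs according to their underlying simple graph.

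First, digraphs of nonzero weight have as underlying graph a forest $F$ in which each component contains at most one vertex of $W$. Indeed, a component containing two vertices of $W$ would have two sinks, which is impossible. A $2$-cycle collapses to a single edge of $F$.

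Conversely, fix such a forest $F$ and consider each component $U$ separately. If $U$ contains a vertex $w\in W$, then $w$ must be the sink and the orientation toward $w$ is forced, which contributes $1$. If $U$ contains no vertex of $W$, then $U$ is oriented either toward a chosen root vertex (weight $+1$, giving $|V(U)|$ options) or toward a chosen edge traversed both ways (weight $-1$, giving $|E(U)|$ options). The contribution is therefore $|V(U)|-|E(U)|=1$.

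Multiplying over components shows that each admissible $F$ is counted exactly once.

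\textbf{Step 2 (inclusion–exclusion over cycles).} Write $w(C)=1+(w(C)-1)$ and expand the product. The total weight becomes a sum over collections $\mathcal S$ of vertex-disjoint directed cycles in $K_{m,n}$ that avoid $W$. Each such collection carries the weight $\prod_{C\in\mathcal S}(w(C)-1)$, that is, $-2$ per $2$-cycle and $-1$ per longer cycle, times the number $(n+1)^{p-k}(m+1)^{q-k}$ of free choices for the remaining vertices. Here $k$ is the number of $L$-vertices covered by $\mathcal S$, which equals the number of covered $R$-vertices.

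To organise the sum, choose the covered sets $L'\subseteq L\setminus W$ and $R'\subseteq R\setminus W$ of size $k$ in $\binom pk\binom qk$ ways. A cycle collection covering exactly $L'\cup R'$ is the same as a pair of bijections $\sigma\colon L'\to R'$ and $\tau\colon R'\to L'$. The cycles of the digraph correspond to the cycles of the permutation $\pi=\tau\sigma$ of $L'$, and a fixed point of $\pi$ corresponds to a $2$-cycle.

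Fixing $\sigma$ gives $k!$ choices, and then $\pi$ ranges bijectively over all permutations of $L'$. The inner sum is therefore $\sum_\pi(-1)^{\mathrm{cyc}(\pi)}2^{\mathrm{fix}(\pi)}$.

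\textbf{Step 3 (the permutation sum).} By the exponential formula, this sum is $k![x^k]$ of
\[
\exp\Bigl(-2x-\sum_{j\geq2}\frac{x^j}{j}\Bigr)=\exp\bigl(-x+\log(1-x)\bigr)=(1-x)e^{-x}.
\]
Hence it equals $k!\bigl((-1)^k/k!+(-1)^k/(k-1)!\bigr)=(-1)^k(k+1)$.

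Combining this with the factor $k!$ from $\sigma$, the binomials, and the free factors gives exactly the stated formula.

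\textbf{Main obstacle.} I expect the main obstacle to be Step 1. There one must check carefully that digraphs of nonzero weight are exactly the two-way-rooted orientations of admissible forests, and that a single-vertex component with no vertex of $W$ (which has a sink and no edge) still contributes $1$. The algebra in Steps 2 and 3 is routine once the correspondence between cycles and the permutation $\tau\sigma$ is set up.
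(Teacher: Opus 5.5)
Your proposal is correct and follows the paper's proof essentially step for step: the same signed count over digraphs with out-degree at most one (weight $-1$ per $2$-cycle, $0$ per longer cycle), the same componentwise check $|V(U)|-|E(U)|=1$, the same expansion $w(C)=1+(w(C)-1)$ over vertex-disjoint cycle families avoiding $W$, and the same reduction to a weighted sum over permutations of the $k$ covered vertices of $L$. The only divergence is how you evaluate $\sum_\pi(-1)^{\mathrm{cyc}(\pi)}2^{\mathrm{fix}(\pi)}=(-1)^k(k+1)$. You use the exponential formula and $(1-x)e^{-x}$; the paper writes the weight as $(-1)^k\operatorname{sgn}(\sigma)2^{\operatorname{fix}(\sigma)}$, expands $2^{\operatorname{fix}(\sigma)}$ over subsets of fixed points, and cancels terms with a transposition involution. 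Both computations are valid.
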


The proof starts from the identity $|V(T)|-|E(T)|=1$ for a tree $T$.
For each component disjoint from $W$, we count its vertices with weight
$1$ and its edges with weight $-1$. A directed graph will record the
vertex or edge selected in each component.
For every $W\subseteq L\cup R$, let $\mathfrak D_W$ consist of the
directed graphs $D$ on $L\cup R$ satisfying
\[
 E(D)\subseteq(L\times R)\cup(R\times L),\qquad
 d_D^+(v)\leq1\ (v\in L\cup R),\quad d_D^+(w)=0\ (w\in W),
\]
where $d_D^+$ denotes outdegree.
For each edge $uv$ of $K_{m,n}$, $D$ may contain neither of the arcs
$(u,v),(v,u)$, exactly one, or both, subject to these outdegree conditions.
If both occur, they form a directed cycle of length $2$.
Let $\underline D$ be the simple graph on $L\cup R$ with edge set
$\{uv:(u,v)\in E(D)\text{ or }(v,u)\in E(D)\}$.
Thus, a pair of opposite arcs in $D$ gives a single edge of $\underline D$.
A vertex of outdegree zero, or \emph{sink}, records a selected vertex;
a $2$-cycle records a selected edge.
We give longer cycles weight zero to exclude cyclic underlying graphs.
For $D\in\mathfrak D_W$, let $\mathfrak C(D)$ denote its set of
directed cycles and define
\[
 w(D)\coloneqq\prod_{C\in\mathfrak C(D)}\varepsilon(C),\qquad
 \varepsilon(C)\coloneqq
 \begin{cases}
 -1,&|E(C)|=2,\\
 0,&|E(C)|>2.
 \end{cases}
\]
\begin{lemma}\label{prop:forest-weighted-sum}
For every $W\subseteq L\cup R$, with $p=|L\setminus W|$ and
$q=|R\setminus W|$,
\[
 f_{p,q}=\sum_{D\in\mathfrak D_W}w(D).
\]
\end{lemma}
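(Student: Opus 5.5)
We group the directed graphs $D\in\mathfrak D_W$ by their underlying graph $\underline D$. Only those $D$ with $w(D)\neq0$ contribute, and these are exactly the $D$ whose directed cycles all have length $2$. We will show that if $w(D)\ne0$ then $\underline D$ is a forest. We will then show that for each forest $F$ on $L\cup R$ the sum $\sum_{D:\underline D=F}w(D)$ equals $1$ if $F\in\mathcal F_{\leq1}(W)$ and $0$ otherwise. Since the sum factors over the components of $F$, both claims reduce to a statement about a single tree $T$.

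\emph{Step 1 (forests only).} Suppose $w(D)\ne0$ but $\underline D$ contains a cycle $Z$ of length at least $4$. Every vertex has outdegree at most $1$, and each edge of $\underline D$ carries at least one arc. In a connected component of $\underline D$ with $v$ vertices and at least $v$ edges, the arcs number at least the edges, and arcs equal the number of vertices of positive outdegree. So in a component $U$ of $\underline D$, the number of edges satisfies $|E(U)|\le |E(D[U])|\le |V(U)|$. Equality in $|E(U)|\le|V(U)|$ forces every vertex to have outdegree $1$ and every edge to carry exactly one arc. Then the unique cycle of $U$ is traversed by a directed cycle of length at least $4$, which has weight zero. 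If $U$ is a tree, any directed cycle would have length $2$, which is fine. Hence $w(D)\neq0$ implies $\underline D$ is a forest. The outdegree-zero condition on $W$ is a constraint we track in the next step.

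\emph{Step 2 (a single tree).} Fix a tree $T$ with $k$ vertices and $k-1$ edges. Consider the $D$ with $\underline D=T$ and outdegrees at most one. Counting arcs, the number of vertices of outdegree $1$ is $k-1+c$, where $c$ is the number of $2$-cycles. Since outdegree is at most one, $c\le1$.
\begin{itemize}
\item If $c=0$, exactly one vertex is a sink, and orienting every edge toward it is forced. This gives one configuration per choice of sink $s$, each of weight $1$.
\item If $c=1$, every vertex has outdegree $1$, and the $2$-cycle sits on some edge $g$. Every other edge is oriented toward $g$, which is forced. This gives one configuration per edge, each of weight $-1$.
\end{itemize}
Now impose $d^+(w)=0$ for $w\in W$. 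If $|V(T)\cap W|\ge2$, no valid orientation exists, since there is at most one sink; the contribution is $0$. If $|V(T)\cap W|=1$, the sink is forced to be that vertex and $c=0$; the contribution is $1$. If $V(T)\cap W=\varnothing$, the contribution is $|V(T)|-|E(T)|=1$.

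Multiplying over the components of $F$ gives the indicator of $F\in\mathcal F_{\leq1}(W)$. Summing over forests $F$ then yields $f_{p,q}$.

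The main obstacle is Step 1, namely making rigorous that a nonzero weight forces an acyclic underlying graph. The component edge/vertex count handles this. One must also check that a configuration with two $2$-cycles cannot occur in a tree component; this follows from the same arc count, which gives $c\le1$.
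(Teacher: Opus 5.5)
Your proof is correct and follows the paper's argument in substance. Like the paper, you group $D$ by $\underline D$, show that a nonzero weight forces a forest, and evaluate each tree's contribution as $1$ when it meets $W$ once and $|V(T)|-|E(T)|=1$ when it avoids $W$. The differences are minor: you get acyclicity from the count $|E(U)|\le|E(D[U])|\le|V(U)|$ and its equality case rather than by following outgoing arcs to a unique sink or $2$-cycle, and you fold the $W$-constraint into the per-tree sum (which is $0$ when a tree meets $W$ twice) instead of excluding such $D$ in the forward direction.
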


\begin{proof}
Fix $D\in\mathfrak D_W$ with $w(D)\ne0$.
From any vertex, following outgoing arcs leads either to a
sink or to a directed cycle.
The endpoints of each arc lead to the same sink or cycle.
Thus, for each $T\in\Comp(\underline D)$, the subdigraph $D[V(T)]$ 
induced by $V(T)$ has a unique sink or a unique
directed cycle, which has length $2$ since $w(D)\ne0$.
There is no pair of opposite arcs in the first case and exactly one
in the second. Hence,
\[
 \begin{aligned}
 |E(T)|&=|E(D[V(T)])|=|V(T)|-1
       &&\text{(a unique sink)},\\
 |E(T)|&=|E(D[V(T)])|-1=|V(T)|-1
       &&\text{(a unique $2$-cycle)}.
 \end{aligned}
\]
Thus, $T$ is a tree. Since every vertex of $W$ is a sink,
$|V(T)\cap W|\leq1$. Therefore,
$\underline D\in\mathcal F_{\leq1}(W)$.

Conversely, fix $F\in\mathcal F_{\leq1}(W)$.
We construct all $D\in\mathfrak D_W$ with $\underline D=F$ and
$w(D)\ne0$ by specifying the arcs on each $T\in\Comp(F)$.
If $V(T)\cap W=\{v\}$, direct all edges towards $v$, giving weight $1$.
If $T$ is disjoint from $W$, either choose a vertex
and direct all edges towards it, with weight $1$;
or choose an edge $uv$, replace it by opposite arcs, and
direct all edges of $T-uv$ towards $u$ and $v$ in their respective
components, with weight $-1$.
Each possible directed graph on $T$ with nonzero weight arises exactly once.
The constructions on distinct components are independent, and their weights
multiply. Hence,
\[
 \sum_{D\in\mathfrak D_W,\, \underline D=F}w(D)
 =\prod_{T\in\Comp(F)}
 \begin{cases}
  1,&V(T)\cap W\ne\varnothing,\\
  |V(T)|-|E(T)|,&V(T)\cap W=\varnothing
 \end{cases}
 =1.
\]
Summing over $F\in\mathcal F_{\leq1}(W)$ proves the lemma,
because the directed graphs with $w(D)\ne0$ all have underlying graphs
in this family.
\end{proof}

\begin{proof}[Proof of \autoref{lem:elementary-forest-enumeration}]
Choose $W\subseteq L\cup R$ with $|L\setminus W|=p$ and $|R\setminus W|=q$. 
Expanding each factor of $w(D)$ as $\varepsilon(C)=1+(\varepsilon(C)-1)$ and
interchanging the sums in \autoref{prop:forest-weighted-sum} gives
\[
 f_{p,q}
  =\sum_{D\in\mathfrak D_W}
 \sum_{\mathfrak Q\subseteq\mathfrak C(D)}
  \prod_{C\in\mathfrak Q}(\varepsilon(C)-1)=\sum_{\mathfrak Q}
 \Big(\prod_{C\in\mathfrak Q}(\varepsilon(C)-1)\Big)
 \sum_{D\in\mathfrak D_W,\,\mathfrak Q\subseteq\mathfrak C(D)}1,
\]
where $\mathfrak Q$ ranges over all families of directed cycles with arcs
in $(L\times R)\cup(R\times L)$.
By the outdegree conditions, the inner sum is nonzero if and only if
the cycles in $\mathfrak Q$ are pairwise vertex-disjoint and avoid $W$.
Fix one such family $\mathfrak Q$. Its cycles alternate between $L$ and $R$,
so together they use $k$ vertices from each part.
The inclusion $\mathfrak Q\subseteq\mathfrak C(D)$ fixes the outgoing arc
at every vertex on its cycles but allows additional cycles in $D$.
Each of the $p-k$ remaining vertices in $L\setminus W$ may have no outgoing
arc or one to any vertex of $R$, giving $n+1$ choices; similarly, each of
the $q-k$ remaining vertices in $R\setminus W$ has $m+1$ choices.
These choices are independent, so the inner sum equals $(n+1)^{p-k}(m+1)^{q-k}$.

Fix an ordering $u_1,\ldots,u_k$ of the vertices of $\mathfrak Q$ in $L$.
The arcs from $L$ to $R$ form a bijection;
write $v_i$ for the image of $u_i$.
The arcs from $R$ to $L$ form another bijection.
Write $\mathcal S_k$ for the symmetric group on
$[k]\coloneqq\{1,\ldots,k\}$, with $[0]=\varnothing$ and
$\mathcal S_0$ consisting of the empty permutation.
The second bijection determines a unique $\sigma\in\mathcal S_k$ such that
\[
 \bigcup_{C\in\mathfrak Q}E(C)
  =\{(u_i,v_i),(v_i,u_{\sigma(i)}):1\leq i\leq k\}.
\]
Conversely, for the fixed vertex sets and first bijection
$u_i\mapsto v_i$, every $\sigma\in\mathcal S_k$ defines a unique family
$\mathfrak Q$ of vertex-disjoint directed cycles by this formula.
A cycle of length $\ell$ in $\sigma$ corresponds to a directed cycle $C$
of length $2\ell$, so $\varepsilon(C)-1$ equals $-2$ for $\ell=1$ and
$-1$ for $\ell>1$.
Let $\operatorname{fix}(\sigma)$ be the number of fixed points of $\sigma$.
Since an $\ell$-cycle has sign $(-1)^{\ell-1}$, we obtain
\[
 \prod_{C\in\mathfrak Q}(\varepsilon(C)-1)
 =(-1)^k\operatorname{sgn}(\sigma)2^{\operatorname{fix}(\sigma)}.
\]
To sum over $\mathfrak Q$, we choose $k$ vertices from $L\setminus W$
and $k$ from $R\setminus W$ in $\binom pk\binom qk$ ways.
The first bijection then has $k!$ choices.
Combining these counts and summing over $k$ gives
\[
 f_{p,q}
 =\sum_{k=0}^{\min\{p,q\}}(-1)^k
    \binom pk\binom qk k!(n+1)^{p-k}(m+1)^{q-k}
    \sum_{\sigma\in\mathcal S_k}
       \operatorname{sgn}(\sigma)2^{\operatorname{fix}(\sigma)}.
\]
Since $2^{\operatorname{fix}(\sigma)}$ counts subsets of these fixed points,
for every integer $k\geq0$,
\begin{equation}
 \sum_{\sigma\in\mathcal S_k}
       \operatorname{sgn}(\sigma)2^{\operatorname{fix}(\sigma)}
 =\sum_{J\subseteq[k]}
 \sum_{\sigma\in\mathcal S_k, \sigma(j)=j\ \text{for all }j\in J}
       \operatorname{sgn}(\sigma)
 =k+1.
 \label{eq:signed-permutation-sum}
\end{equation}
If $|[k]\setminus J|\geq2$, choose distinct $k_1,k_2\in[k]\setminus J$.
The pairing $\sigma\leftrightarrow(k_1\,k_2)\circ\sigma$ cancels the terms
in the inner sum. Otherwise, only the identity contributes, so each of
the $k+1$ subsets $J$ with $|[k]\setminus J|\leq1$ contributes $1$.
Substituting~\eqref{eq:signed-permutation-sum} proves the proposition.
\end{proof}

\subsection{Edge-inclusion counts and negative correlation}

To compute the disjoint-edge count via
\eqref{eq:forest-component-counts}, we must count forests in which the
endpoints of one prescribed edge share a component and the endpoints of
the other lie in two further components. Deleting the path joining the
endpoints of the first edge expresses this count as a sum of restricted
forest counts. The auxiliary sum below evaluates this count.

For integers $p,q$ with $0\leq p\leq m$ and $0\leq q\leq n$,
define the auxiliary sum obtained from
\autoref{lem:elementary-forest-enumeration} by omitting the factor $k+1$:
\[
g_{p,q}\coloneqq\sum_{k=0}^{\min\{p,q\}}(-1)^k\binom pk\binom qk k!
(n+1)^{p-k}(m+1)^{q-k}.
\]
Set $g_{p,q}=f_{p,q}=0$ when $\min\{p,q\}<0$, and use
$\binom{\ell}{k}=0$ for integers $k>\ell\geq0$.
The identity
\begin{equation}
\ell\binom{\ell-1}{k-1}=k\binom{\ell}{k}\qquad(\ell,k\geq1)
\label{eq:binomial-identity}
\end{equation}
and an index shift separate the contributions of $1$ and $k$
in \autoref{lem:elementary-forest-enumeration}, yielding
\begin{equation}
 f_{p,q}=g_{p,q}-pqg_{p-1,q-1}.
 \label{eq:forest-auxiliary}
\end{equation}
When one index is zero, both sums reduce to
$g_{p,0}=f_{p,0}=(n+1)^p$ or
$g_{0,q}=f_{0,q}=(m+1)^q$, so~\eqref{eq:forest-auxiliary}
also holds on the boundary.
For $0\leq p\leq m$ and $0\leq q\leq n$, the edgeless graph is counted by
$f_{p,q}$, so $f_{p,q}\geq1$.
Iterating~\eqref{eq:forest-auxiliary} until one index is zero gives
\begin{equation}
 g_{p,q}
 =\sum_{k=0}^{\min\{p,q\}}\binom pk\binom qk(k!)^2f_{p-k,q-k}
 \geq f_{p,q}\geq1.
 \label{eq:forest-positive-expansion}
\end{equation}

We also record the recurrences used below to evaluate the inclusion counts
and compare $f_{p,q}$ with $g_{p,q}$.
For $0\leq p\leq m$ and $0\leq q\leq n$, we have
\begin{subequations}
\begin{align}
& g_{p,q}=(n+1)g_{p-1,q}-qg_{p-1,q-1}
     &&(p\geq1),\label{eq:forest-recurrence-left}\\
&g_{p,q}=(m+1)g_{p,q-1}-pg_{p-1,q-1}
     &&(q\geq1),\label{eq:forest-recurrence-right}\\
& g_{p+1,q+1}=(mn+m+n-p-q-1)g_{p,q}+f_{p,q}
     &&(p<m,\ q<n).\label{eq:forest-recurrence-diagonal}
\end{align}
\end{subequations}
Expand $\binom pk$ by Pascal's identity in the definition of
$g_{p,q}$. Reindexing the second sum with~\eqref{eq:binomial-identity}
proves~\eqref{eq:forest-recurrence-left};
\eqref{eq:forest-recurrence-right} follows by interchanging the parts.
For~\eqref{eq:forest-recurrence-diagonal}, assume first that $p,q\geq1$.
Substitute
\eqref{eq:forest-recurrence-right} into
\eqref{eq:forest-recurrence-left} at $(p+1,q+1)$.
Then eliminate $(n+1)g_{p-1,q}$ using
\eqref{eq:forest-recurrence-left} and replace $pqg_{p-1,q-1}$ by
$g_{p,q}-f_{p,q}$ using~\eqref{eq:forest-auxiliary}.
The zero-index cases follow directly.

\begin{lemma}
\label{lem:three-forest-counts}
Let $e,f$ be disjoint edges of $K_{m,n}$, and put $r=m-2$ and $s=n-2$. Then
\[
 i(K_{m,n})=(mn+m+n+5)g_{r,s}+2f_{r,s},\qquad
 i_e(K_{m,n})=(m+n+2)g_{r,s}+f_{r,s}.
\]
\[
 (m+1)(n+1)i_{ef}(K_{m,n})
 =(m+n)(m+n+2)g_{r,s}+(mn-1)f_{r,s}.
\]
\end{lemma}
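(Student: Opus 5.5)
We fix disjoint edges $e=ab$ and $f=cd$ with $a,c\in L$ and $b,d\in R$, and put $W=\{a,b,c,d\}$. The plan is to express each of the three counts through the component partition $\pi(F)$ of $W$ from the proof of \autoref{prop:twin-comparison}. We then evaluate the class sizes $h_\pi(1)$ in terms of $f_{r,s}$ and $g_{r,s}$, and finally use the recurrences \eqref{eq:forest-auxiliary} and \eqref{eq:forest-recurrence-left}--\eqref{eq:forest-recurrence-diagonal} to bring everything to the base point $(r,s)$.

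\textbf{Step 1: the two easier counts.} Deleting $e$ from a forest containing $e$ gives a bijection onto forests in which $a$ and $b$ lie in different components. Hence $i_e(K_{m,n})=|\mathcal F_{\le1}(\{a,b\})|=f_{m-1,n-1}$. For $i(K_{m,n})$, every forest either contains $e$ or does not. A forest avoiding $e$ either separates $a$ from $b$, or joins them through a unique path of length at least $3$. So $i(K_{m,n})=2f_{m-1,n-1}+P$, where $P$ counts forests whose $a$--$b$ path has length at least $3$.

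To evaluate $P$, we use path deletion. Such a path is $a=x_0,y_1,x_1,\dots,y_k=b$ with $k\ge2$ internal pairs, and it has $\binom{m-1}{k-1}\binom{n-1}{k-1}((k-1)!)^2$ choices. Deleting its edges leaves a forest in which the $2k$ path vertices lie in distinct components. That is exactly $\mathcal F_{\le1}$ of a set leaving $m-k$ vertices of $L$ and $n-k$ of $R$ outside, which is counted by $f_{m-k,n-k}$. Comparing with the expansion \eqref{eq:forest-positive-expansion} gives $f_{m-1,n-1}+P=g_{m-1,n-1}$, so
\[
i(K_{m,n})=g_{m-1,n-1}+f_{m-1,n-1}.
\]
The same path-deletion idea will be reused in Step 2.

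\textbf{Step 2: the joint count.} By \eqref{eq:forest-component-counts} at $t=1$, $i_{ef}(K_{m,n})=h_{\pi_0}+h_{\pi_{ac}}+h_{\pi_{bd}}+2h_{\pi_{cd}}$.
\begin{itemize}
\item The class $h_{\pi_0}$ equals $f_{r,s}$ directly.
\item For $h_{\pi_{cd}}$, we sum over $c$--$d$ paths of length $2k-1$, with $(k-1)!^2\binom{r}{k-1}\binom{s}{k-1}$ choices. After deletion we need the path vertices together with $a,b$ to lie in distinct components, giving $f_{r-k+1,s-k+1}$. This yields $h_{\pi_{cd}}=g_{r,s}$ by \eqref{eq:forest-positive-expansion}.
\item For $h_{\pi_{ac}}$, the $a$--$c$ path has even length $2k$: it has $k$ internal $R$-vertices and $k-1$ internal $L$-vertices. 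The count is $\sum_k \binom{s}{k}k!\binom{r}{k-1}(k-1)!\,f_{r-k+1,s-k}$, and $h_{\pi_{bd}}$ is the mirror sum.
\end{itemize}
We will collapse these mixed sums using the recurrences \eqref{eq:forest-recurrence-left} and \eqref{eq:forest-recurrence-right}, run backwards, into combinations of $g_{r,s}$, $g_{r+1,s}$ and $g_{r,s+1}$. The aim is to show $h_{\pi_{ac}}=g_{r,s+1}-g_{r,s}$-type identities. The step I would try first is to verify the generating identity $\sum_k\binom{q}{k}\binom{p}{k-1}k!(k-1)!f_{p-k+1,q-k}$ against $g$ via \eqref{eq:forest-positive-expansion} applied with a shifted index.

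\textbf{Step 3: reduce to $(r,s)$.} Apply \eqref{eq:forest-recurrence-diagonal} at $(r,s)$ to write $g_{r+1,s+1}=(mn+m+n-r-s-1)g_{r,s}+f_{r,s}$, and \eqref{eq:forest-auxiliary} to get $f_{r+1,s+1}=g_{r+1,s+1}-(r+1)(s+1)g_{r,s}$. Simplifying gives the stated $i(K_{m,n})$ and $i_e(K_{m,n})$; for instance $i_e=g_{r+1,s+1}-(m-1)(n-1)g_{r,s}=(m+n+2)g_{r,s}+f_{r,s}$, which checks out.

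For $i_{ef}$, the terms $g_{r+1,s}$ and $g_{r,s+1}$ are not expressible in $g_{r,s},f_{r,s}$ alone. We will eliminate them by combining \eqref{eq:forest-recurrence-left} at $(r+1,s+1)$ with \eqref{eq:forest-recurrence-right}: these give $(n+1)g_{r,s+1}$ and $(m+1)g_{r+1,s}$ in terms of $g_{r+1,s+1}$ and $g_{r,s}$. This explains the factor $(m+1)(n+1)$ in the statement. Multiplying through by $(m+1)(n+1)$ and substituting should produce $(m+n)(m+n+2)g_{r,s}+(mn-1)f_{r,s}$.

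I expect the main obstacle to be Step 2, that is, correctly evaluating $h_{\pi_{ac}}$ and $h_{\pi_{bd}}$ as closed combinations of the $g$'s, together with the index bookkeeping in the final elimination.
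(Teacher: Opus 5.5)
Your route works and differs from the paper's. The step you leave open closes by exactly the shifted-index idea you propose, though not in the shape you guess. Since $\binom sk k!=s\binom{s-1}{k-1}(k-1)!$, putting $j=k-1$ in your sum gives
\[
 h_{\pi_{ac}}=s\sum_{j\geq0}\binom rj\binom{s-1}{j}(j!)^2f_{r-j,s-1-j}=s\,g_{r,s-1}
\]
by \eqref{eq:forest-positive-expansion} at $(r,s-1)$. Symmetrically, $h_{\pi_{bd}}=r\,g_{r-1,s}$.

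Multiplying \eqref{eq:forest-recurrence-right} at $(r,s)$ by $s$ and using \eqref{eq:forest-auxiliary-rs} yields $(m+1)h_{\pi_{ac}}=(n-1)g_{r,s}-f_{r,s}$; both sides vanish when $s=0$. Likewise $(n+1)h_{\pi_{bd}}=(m-1)g_{r,s}-f_{r,s}$. Substitute these into $i_{ef}=f_{r,s}+2g_{r,s}+h_{\pi_{ac}}+h_{\pi_{bd}}$ and multiply by $(m+1)(n+1)$. The coefficient of $g_{r,s}$ is $2(m+1)(n+1)+(n^2-1)+(m^2-1)=(m+n)(m+n+2)$, and that of $f_{r,s}$ is $(m+1)(n+1)-(m+1)-(n+1)=mn-1$, as stated.

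So $h_{\pi_{ac}}$ is not of the form $g_{r,s+1}-g_{r,s}$. If you prefer your Step 3 elimination, the correct relation is $h_{\pi_{ac}}=(n+1)g_{r,s}-g_{r+1,s}$, from \eqref{eq:forest-recurrence-left} at $(r+1,s)$; thus $ac$ pairs with $g_{r+1,s}$, not $g_{r,s+1}$. Two small slips:
\begin{itemize}
\item $g_{r+1,s}$ and $g_{r,s+1}$ \emph{are} expressible through $g_{r,s},f_{r,s}$, with denominators $m+1$ and $n+1$; your own next sentence does exactly this.
\item In Step 1 the $a$--$b$ path has $k-1\geq1$ internal pairs, not $k$. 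Your count is nevertheless right.
\end{itemize}

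Compared with the paper: the paper never evaluates $h_{\pi_{ac}}$ or $h_{\pi_{bd}}$. It uses the adjacent-pair deletions $i_{ab,ad}=f_{r+1,s}$ and $i_{ab,cb}=f_{r,s+1}$, and combines the three rows of \eqref{eq:forest-component-counts} to get $i_{ef}=f_{r+1,s}+f_{r,s+1}-f_{r,s}-2g_{r,s}$. Hence only the odd $c$--$d$ path deletion is needed, at the cost of a recurrence chain to evaluate $f_{r+1,s}$.

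Likewise, the paper obtains $i(K_{m,n})=f_{m,n}=g_{m-1,n-1}+f_{m-1,n-1}$ algebraically from \eqref{eq:forest-recurrence-diagonal} and \eqref{eq:forest-auxiliary}. You derive the same identity bijectively, by splitting on whether $a$ and $b$ are connected.

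Your route needs a second, even-length path deletion. In return, it gives each partition class a closed combinatorial value ($h_{\pi_{ac}}=s\,g_{r,s-1}$) and a shorter algebraic finish.
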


\begin{proof}
Write $e=ab$ and $f=cd$, where $a,c\in L$ and $b,d\in R$.
Deleting the prescribed edges gives
\[
 \begin{aligned}
 \mathcal F_{\{ab\}}(K_{m,n})
   &\longleftrightarrow\mathcal F_{\leq1}(\{a,b\}),\\
 \mathcal F_{\{ab,ad\}}(K_{m,n})
   &\longleftrightarrow\mathcal F_{\leq1}(\{a,b,d\}),\\
 \mathcal F_{\{ab,cb\}}(K_{m,n})
   &\longleftrightarrow\mathcal F_{\leq1}(\{a,b,c\}).
 \end{aligned}
\]
In each case, adding the prescribed edges joins distinct components
and is the inverse. Thus, $i(K_{m,n})=f_{m,n}$, $i_e(K_{m,n})=f_{m-1,n-1}$,
$i_{ab,ad}(K_{m,n})=f_{r+1,s}$, and $i_{ab,cb}(K_{m,n})=f_{r,s+1}$.
Recall from the proof of \autoref{prop:twin-comparison} that
$\mathcal H_\pi$ consists of forests inducing the component partition
$\pi$ on $\{a,b,c,d\}$, and $h_\pi(1)=|\mathcal H_\pi|$.
The partition $\pi_0$ separates all four vertices, whereas $\pi_{cd}$
puts only $c,d$ together.
Setting $t=1$ in~\eqref{eq:forest-component-counts} gives
\[
 i_{ef}(K_{m,n})=i_{ab,ad}(K_{m,n})+i_{ab,cb}(K_{m,n})-h_{\pi_0}(1)-2h_{\pi_{cd}}(1).
\]
Here, $\mathcal H_{\pi_0}=\mathcal F_{\leq1}(\{a,b,c,d\})$,
so $h_{\pi_0}(1)=f_{r,s}$.
To count $\mathcal H_{\pi_{cd}}$, delete the edges of the unique
$c$--$d$ path $P_F$ in each $F\in\mathcal H_{\pi_{cd}}$. The map
\[
 F\longmapsto(P_F,F-E(P_F))
\]
is a bijection onto pairs $(P,F')$ with $P$ a $c$--$d$
path in $K_{m,n}-\{a,b\}$ and
$F'\in\mathcal F_{\leq1}(V(P)\cup\{a,b\})$.
Indeed, deleting the path edges separates its vertices into distinct
components, since a forest has unique paths. Conversely, adding
$E(P)$ joins those components into a tree with unique $c$--$d$
path $P$ and leaves $a,b$ in two other components.
Thus, $(P,F')\mapsto F'+E(P)$ is the inverse.

A $c$--$d$ path of length $2k+1$ has $k$ ordered internal vertices in each part.
There are $\binom rk\binom sk(k!)^2$ such paths, and for each path the
number of possible $F'$ is $f_{r-k,s-k}$.
The expansion~\eqref{eq:forest-positive-expansion}
identifies the resulting path-length sum:
\[
 h_{\pi_{cd}}(1)=\sum_{k=0}^{\min\{r,s\}}
          \binom rk\binom sk(k!)^2 f_{r-k,s-k}=g_{r,s}.
\]
Thus, the three inclusion counts satisfy
\begin{equation}
 i(K_{m,n})=f_{m,n},\quad i_e(K_{m,n})=f_{m-1,n-1},\quad
 i_{ef}(K_{m,n})=f_{r+1,s}+f_{r,s+1}-f_{r,s}-2g_{r,s}.
 \label{eq:three-forest-counts}
\end{equation}

Apply~\eqref{eq:forest-recurrence-diagonal} at $(r,s)$ and
$(m-1,n-1)$. In the second instance, eliminate $g_{m,n}$ using
\eqref{eq:forest-auxiliary}:
\[
 g_{m-1,n-1}=(mn+3)g_{r,s}+f_{r,s},\qquad f_{m,n}=g_{m-1,n-1}+f_{m-1,n-1}.
\]
Substituting for $f_{m-1,n-1}$ from
\eqref{eq:forest-auxiliary} evaluates the first two counts:
\[
 i(K_{m,n})=g_{m-1,n-1}+f_{m-1,n-1}=(mn+m+n+5)g_{r,s}+2f_{r,s},
 \]
 \[
 i_e(K_{m,n})=g_{m-1,n-1}-(m-1)(n-1)g_{r,s}=(m+n+2)g_{r,s}+f_{r,s}.
\]
At $(p,q)=(r,s)$, \eqref{eq:forest-auxiliary} reads
\begin{equation}
 rsg_{r-1,s-1}=g_{r,s}-f_{r,s}.
 \label{eq:forest-auxiliary-rs}
\end{equation}
For $f_{r+1,s}$, substitute
\eqref{eq:forest-recurrence-left} into~\eqref{eq:forest-auxiliary},
multiply by $m+1$, and apply
\eqref{eq:forest-recurrence-right}:
\[
 \begin{aligned}
 (m+1)f_{r+1,s}
 &=(m+1)(n+1)g_{r,s}-(r+2)s(m+1)g_{r,s-1}\\
 &=(m+1)(n+1)g_{r,s}
       -(r+2)(sg_{r,s}+rsg_{r-1,s-1}).
 \end{aligned}
\]
For $s=0$, the same equality follows directly from the boundary
values. Equation~\eqref{eq:forest-auxiliary-rs} replaces
$rsg_{r-1,s-1}$ by $g_{r,s}-f_{r,s}$.
{Interchanging the parts gives the corresponding formula for
$f_{r,s+1}$:}
\[
 f_{r+1,s}=\frac{(2m+n+1)g_{r,s}+mf_{r,s}}{m+1},\qquad
 f_{r,s+1}=\frac{(m+2n+1)g_{r,s}+nf_{r,s}}{n+1}.
\]
The formula for $i_{ef}(K_{m,n})$ now follows from
\eqref{eq:three-forest-counts}.
\end{proof}
\begin{proof}[Proof of \autoref{thm:main-forest}]
By \autoref{cor:pair-order}, it suffices to consider disjoint edges
$e,f$. Set $r=m-2$ and $s=n-2$.
Substituting the counts from \autoref{lem:three-forest-counts}
and using~\eqref{eq:equal-edge-counts} yields the identity
\begin{align}
 &(m+1)(n+1)\bigl(i_e(K_{m,n})i_f(K_{m,n})-i(K_{m,n})i_{ef}(K_{m,n})\bigr)\notag\\
 =& (m-1)(n-1)
       \bigl(2(m+n+2)g_{r,s}^2-(mn+3)g_{r,s}f_{r,s}-f_{r,s}^2\bigr)
 +4(mn+3)g_{r,s}f_{r,s}+4f_{r,s}^2.
 \label{eq:forest-gap-reduction}
\end{align}
To bound the bracket, we need a sharper comparison between
$f_{r,s}$ and $g_{r,s}$. For $r,s\geq1$,
\eqref{eq:forest-recurrence-diagonal} at $(r-1,s-1)$ and
\eqref{eq:forest-positive-expansion} give
\[
 g_{r,s}=(mn+5)g_{r-1,s-1}+f_{r-1,s-1}\leq(mn+6)g_{r-1,s-1}.
\]
Since $mn+6-rs=2(m+n+1)$, multiply by $rs$ and use
\eqref{eq:forest-auxiliary-rs} to obtain
\[
 1\leq f_{r,s}\leq g_{r,s},\qquad
 (mn+6)f_{r,s}\leq2(m+n+1)g_{r,s}.
\]
If $r=0$ or $s=0$, the same bounds hold because $g_{r,s}=f_{r,s}$
and $2(m+n+1)=mn+6$.
The inequality $f_{r,s}^2\leq g_{r,s}f_{r,s}$ bounds the
subtracted terms by $(mn+4)g_{r,s}f_{r,s}$. Applying the sharper
comparison above gives
\[
 2(m+n+2)g_{r,s}^2-(mn+3)g_{r,s}f_{r,s}-f_{r,s}^2
 \geq 2g_{r,s}^2>0.
\]
Therefore, the right-hand side of
\eqref{eq:forest-gap-reduction} is positive, proving the theorem.
\end{proof}
\section{The uniform connected-subgraph measure}\label{sec:connected}
By \autoref{cor:pair-order}, it suffices to consider two disjoint edges.
We first assume $m,n\geq3$; the cases in which one part has
size two are treated in the final proof.

\subsection{Counting connected extensions}

Fix $a,c\in L$ and $b,d\in R$, with $a\ne c$ and $b\ne d$.
Set $e=ab$, $f=cd$, and
$W\coloneqq\{a,b,c,d\}$.
As in the proof of \autoref{prop:connected-pair-comparison}, we group
connected spanning subgraphs by their restriction after deleting $W$.
For a spanning subgraph $H$ of $K_{m,n}-W$, let $j^H$ count the
members $G$ of $\mathcal C(K_{m,n})$ with $G-W=H$.
Let $j_e^H$, $j_f^H$, and $j_{ef}^H$ count those containing $e$, $f$,
and both edges, respectively.
Every $G\in\mathcal C(K_{m,n})$ has a unique restriction $H=G-W$.
Summing over all spanning subgraphs $H$ of $K_{m,n}-W$ gives
\[
 j(K_{m,n})=\sum_Hj^H,\qquad
 j_e(K_{m,n})=\sum_Hj_e^H,\qquad
 j_{ef}(K_{m,n})=\sum_Hj_{ef}^H.
\]
Simultaneously interchanging $a,c$ and $b,d$ fixes $H$
and exchanges $e,f$, so $j_e^H=j_f^H$.

{By \eqref{eq:equal-edge-counts},
$j_e(K_{m,n})=j_f(K_{m,n})$.
Completing the square gives, for every real $\theta$,}

\begin{equation}\label{eq:centering-identity}
\begin{aligned}
 &j_e(K_{m,n})j_f(K_{m,n})-j(K_{m,n})j_{ef}(K_{m,n})\\
& =\bigl(j_e(K_{m,n})-\theta j(K_{m,n})\bigr)^2-j(K_{m,n})\bigl(j_{ef}(K_{m,n})-2\theta j_e(K_{m,n}) +\theta^2j(K_{m,n})\bigr).
\end{aligned}
\end{equation}
Since $j(K_{m,n})>0$, it suffices to find one parameter $\theta$,
depending only on $m,n$, for which
$j_{ef}^H-2\theta j_e^H+\theta^2j^H<0$ for every $H$.
We derive a componentwise expansion.

Fix $H$. List its isolated vertices in $L$ as $u_1,\ldots,u_l$,
those in $R$ as $v_1,\ldots,v_r$, and its remaining components as
$U_1,\ldots,U_N$. Thus, $N=\omega(H)-l-r\geq0$.
For these components, put
\[
 x_i\coloneqq2^{|V(U_i)\cap R|},\qquad
 y_i\coloneqq2^{|V(U_i)\cap L|}\qquad(1\leq i\leq N).
\]
Thus, $x_i$ counts the edge subsets joining $U_i$ to $a$ (or $c$),
and $y_i$ counts those joining $U_i$ to $b$ (or $d$),
including the empty set in both cases.
Each isolated vertex has three nonempty attachments to $W$.
We normalize these counts by $3$; for each partition below, an isolated
vertex then contributes $1$ or $2/3$. For a nonzero real parameter
$\theta$, set
\[
 \alpha\coloneqq(2/3)^l,\qquad
 \beta\coloneqq(2/3)^r,\qquad
 z\coloneqq4-\frac2\theta,\qquad
 \Phi\coloneqq\frac{j_{ef}^H-2\theta j_e^H+\theta^2j^H}
                         {\theta^2 3^{l+r}}.
\]
We compute $\Phi$ directly by weighted M\"obius inversion on the lattice
$\Pi(W)$ of partitions of $W$, ordered by refinement:
$\sigma\leq\pi$ means that each block of $\sigma$ is contained in a
block of $\pi$.
Write $\hat{1}=\{W\}$ for the one-block partition.

To obtain a product over components, we count nonempty attachments
from each component of $H$ to a single block of a partition.
For $\pi\in\Pi(W)$, put
\[
 q_\pi(x,y)\coloneqq
 \sum_{B\in\pi}\bigl(x^{|B\cap L|}y^{|B\cap R|}-1\bigr).
\]
Let $k(\pi)$ be the number of edges of $K_{m,n}[W]$
whose endpoints lie in a common block of $\pi$.
The products for the different partition types are defined in
Table~\ref{tab:partition-products}.
Vertical bars separate blocks.
All products run over $1\leq i\leq N$, with an empty product
taken to be $1$.
The last column gives the coefficient of each product in $\Phi$,
summed over the partitions in that row.

\begin{center}
\begin{minipage}{\linewidth}
\centering
\captionof{table}{Partitions of $W$, component products, and their coefficients in $\Phi$.}
\label{tab:partition-products}
\(
 \begin{array}{@{}cclc@{}}
 \hline
 \text{partitions }\pi&k(\pi)&\qquad\prod_i q_\pi(x_i,y_i)&
 \text{coefficient in }\Phi\\
 \hline
 abcd                           &4&
 \rho_1\coloneqq\prod\nolimits_i(x_i^2y_i^2-1)&z^2\\
 a|bcd,\ c|abd                   &2&
 \rho_{2,1}\coloneqq\prod\nolimits_i\bigl(x_i(y_i^2+1)-2\bigr)&-2z\beta\\
 b|acd,\ d|abc                   &2&
 \rho_{2,2}\coloneqq\prod\nolimits_i\bigl(y_i(x_i^2+1)-2\bigr)&-2z\alpha\\
 ac|bd                          &0&
 \rho_{2,3}\coloneqq\prod\nolimits_i(x_i^2+y_i^2-2)&-1\\
 ab|cd,\ ad|bc                   &2&
 \rho_{2,4}\coloneqq\prod\nolimits_i2(x_iy_i-1)&-\alpha\beta(4+z^2/4)\\
 ac|b|d                         &0&
 \rho_{3,1}\coloneqq\prod\nolimits_i(x_i^2+2y_i-3)&2\alpha\\
 bd|a|c                         &0&
 \rho_{3,2}\coloneqq\prod\nolimits_i(y_i^2+2x_i-3)&2\beta\\
 ab|c|d,\ ad|b|c,\ bc|a|d,\ cd|a|b&1&
 \rho_{3,0}\coloneqq\prod\nolimits_i(x_iy_i+x_i+y_i-3)&\alpha\beta(8+2z)\\
 a|b|c|d                        &0&
 \rho_4\coloneqq\prod\nolimits_i2(x_i+y_i-2)&-6\alpha\beta\\
 \hline
 \end{array}
\)
\end{minipage}
\end{center}

In the numerical subscripts, the first entry is the number of blocks
in a partition $\pi\in\Pi(W)$; the second distinguishes partition types.

\begin{lemma}\label{lem:four-endpoint-counts}
Let $H$ be a spanning subgraph of $K_{m,n}-W$, where $m,n\geq2$.
For every nonzero real $\theta$, with $z=4-2/\theta$, we have
\[
 \Phi=
    z^2\Bigl(\rho_1-\frac{\alpha\beta}{4}\rho_{2,4}\Bigr)
    -2z\bigl(\beta\rho_{2,1}+\alpha\rho_{2,2}-\alpha\beta\rho_{3,0}\bigr)+Q(\alpha,\beta),
\]
where
\[
 Q(\alpha,\beta)\coloneqq
    2\alpha\rho_{3,1}+2\beta\rho_{3,2}-\rho_{2,3}
    +\alpha\beta(8\rho_{3,0}-4\rho_{2,4}-6\rho_4).
\]
\end{lemma}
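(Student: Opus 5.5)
The plan is to compute $j^H$, $j_e^H$ and $j_{ef}^H$ separately, each as an alternating sum over $\Pi(W)$. Then I form $j_{ef}^H-2\theta j_e^H+\theta^2j^H$, divide by $\theta^23^{l+r}$, and regroup as a polynomial in $z$.

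\emph{Step 1: structure of $G$.} A graph $G$ with $G-W=H$ is determined by two choices: a subset of the four edges of $K_{m,n}[W]$, and, for each component of $H$, the set of edges joining it to $W$. Such a $G$ is connected exactly when two conditions hold. First, every component of $H$ receives at least one edge to $W$. Second, the partition of $W$ into classes of vertices joined in $G$ is $\hat1$.

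\emph{Step 2: Möbius inversion.} For $\pi\in\Pi(W)$, let $A(\pi)$ count the configurations whose connectivity partition on $W$ is $\leq\pi$. These are the configurations with no $W$-edge between distinct blocks of $\pi$, in which each component of $H$ sends a nonempty set of edges into a single block of $\pi$. This gives the product form
\[
 A(\pi)=2^{k(\pi)}\prod_{i}q_\pi(x_i,y_i)\prod_{\text{isolated }u}\bigl(\textstyle\sum_{B\in\pi}(2^{|B\cap R|}-1)\bigr)\prod_{\text{isolated }v}\bigl(\textstyle\sum_{B\in\pi}(2^{|B\cap L|}-1)\bigr).
\]
The isolated factors equal $3$ when the two vertices of $W$ in the opposite part share a block, and $2$ otherwise. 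After dividing by $3^{l+r}$ they therefore become $1$ or $2/3$, producing the powers of $\alpha$ and $\beta$ listed in the table. Möbius inversion on $\Pi(W)$ then gives
\[
 j^H=\sum_\pi\mu(\pi,\hat1)A(\pi),\qquad \mu(\pi,\hat1)=(-1)^{|\pi|-1}(|\pi|-1)!.
\]
The same formula computes $j_e^H$ and $j_{ef}^H$ if we replace $2^{k(\pi)}$ as follows. For $j_e^H$, use $2^{k(\pi)-1}$ when $a,b$ share a block of $\pi$, and $0$ otherwise. For $j_{ef}^H$, use $2^{k(\pi)-2}$ when both $a,b$ and $c,d$ are within blocks, and $0$ otherwise. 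The count of attachments from $H$ is unchanged, since the prescribed edges lie inside $W$.

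\emph{Step 3: row-by-row coefficients.} Partitions of the same type give the same product $\prod_iq_\pi(x_i,y_i)$ and the same $\alpha,\beta$ factor, by the symmetries $a\leftrightarrow c$ and $b\leftrightarrow d$. For each row of Table~\ref{tab:partition-products}, I collect $\mu(\pi,\hat1)$ times the combination $\theta^{-2}[\cdot]_{ef}-2\theta^{-1}[\cdot]_e+[\cdot]$ of the three edge factors, and substitute $1/\theta=(4-z)/2$. For example, in the row $ab|cd,\ ad|bc$ the factors are $(1,2,4)$ for $ab|cd$ and $(0,0,4)$ for $ad|bc$, with $\mu=-1$. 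The row coefficient is then $-\alpha\beta\bigl((4-z)^2/4-2(4-z)+8\bigr)=-\alpha\beta(4+z^2/4)$. The other rows are routine checks: $abcd$ gives $z^2$; $a|b|c|d$ gives $-6\alpha\beta$, since $\mu=-6$ and $e,f$ are excluded; and so on. This verifies the last column of the table.

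\emph{Step 4: regrouping.} Finally, I group the resulting sum by powers of $z$. The $z^2$ terms are $\rho_1-\frac{\alpha\beta}4\rho_{2,4}$. The $z$ terms are $-2z(\beta\rho_{2,1}+\alpha\rho_{2,2})+2z\alpha\beta\rho_{3,0}$. The constant terms form $Q(\alpha,\beta)$.

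The main obstacle is bookkeeping rather than ideas. The points needing care are the isolated-vertex factors $2/3$ versus $1$ for each partition type, and the correct edge factor $2^{k(\pi)-|X|}$. I would check each row of the table separately before summing.
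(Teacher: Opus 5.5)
Your proposal is correct and is essentially the paper's argument. It uses M\"obius inversion on $\Pi(W)$ over extensions in which every component meets $W$, with attachment factors $q_\pi$ and the $1$ versus $2/3$ normalization for isolated vertices. The only difference is bookkeeping: the paper folds the combination into one weight $w(G)=3^{-(l+r)}(1-\mathbf1_{\{e\in E(G)\}}/\theta)(1-\mathbf1_{\{f\in E(G)\}}/\theta)$, so each allowed edge $e$ or $f$ contributes $z/2$ directly, whereas you invert $j^H$, $j_e^H$, $j_{ef}^H$ separately and combine row by row, which yields the same table.
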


\begin{proof}
To obtain products over components, we represent $\Phi$
as a weighted sum. For each spanning subgraph $G$ of
$K_{m,n}$, define
\[
 w(G)\coloneqq\frac1{3^{l+r}}
       \Bigl(1-\frac{\mathbf1_{\{e\in E(G)\}}}{\theta}\Bigr)
       \Bigl(1-\frac{\mathbf1_{\{f\in E(G)\}}}{\theta}\Bigr).
\]
Expanding the product and using $j_e^H=j_f^H$, we deduce
\[
 \sum_{G\in\mathcal C(K_{m,n}),\,G-W=H}w(G)
 =\frac{j_{ef}^H-2\theta j_e^H+\theta^2j^H}{\theta^2 3^{l+r}}
 =\Phi.
\]
To apply M\"obius inversion, we enlarge the summation range to the family
$\mathcal G$ of spanning subgraphs $G$ of $K_{m,n}$ such that $G-W=H$
and every component meets $W$.
The connected components of each $G\in\mathcal G$ induce a partition
$\pi(G)$ of $W$. Since every component meets $W$, connectedness is
equivalent to $\pi(G)=\hat1$. Therefore, M\"obius inversion yields
\[
 \sum_{G\in\mathcal G,\,\pi(G)=\hat1}w(G)
 =\sum_{\pi\in\Pi(W)}\mu(\pi,\hat1)
       \sum_{G\in\mathcal G,\,\pi(G)\leq\pi}w(G),
 \qquad \mu(\pi,\hat1)=(-1)^{|\pi|-1}(|\pi|-1)!.
\]
For $G\in\mathcal G$, the condition $\pi(G)\leq\pi$ is
equivalent to requiring that each component of $H$ have a nonempty set
of edges to one block of $\pi$ and no edges to any other block, and that
every edge of $G[W]$ have both endpoints in one block. The attachment sets for $U_i$ are counted by
$q_\pi(x_i,y_i)$, while those for $u_j$ and $v_j$ are counted by
$q_\pi(1,2)$ and $q_\pi(2,1)$, respectively. These choices are independent.
Including the factor $3^{-(l+r)}$ from $w(G)$ gives the attachment factor
\[
 \Bigl(\frac{q_\pi(1,2)}3\Bigr)^l
 \Bigl(\frac{q_\pi(2,1)}3\Bigr)^r
 \prod_{i=1}^Nq_\pi(x_i,y_i).
\]
For an allowed edge within $W$, summing over its absence and presence
gives $2$ if it is neither $e$ nor $f$, and
$1+(1-1/\theta)=z/2$ otherwise. A forbidden edge must be absent
and contributes $1$.
Combining these attachment and edge factors with $\mu(\pi,\hat1)$ gives the
coefficients in Table~\ref{tab:partition-products}. For example, $ab|cd$
and $ad|bc$ contribute $-\alpha\beta z^2/4$ and $-4\alpha\beta$ to the
coefficient of $\rho_{2,4}$, respectively.
{Summing the rows gives the expansion of $\Phi$ in
\autoref{lem:four-endpoint-counts}.}
\end{proof}

To choose $\theta$, first suppose that $H$ is connected.
Then $l=r=0$, $N=1$,
$x_1=2^{n-2}$, and $y_1=2^{m-2}$. Hence,
\[
 \Phi=(zx_1y_1-x_1-y_1)^2
       -\frac{x_1y_1+1}{2}(z-2)^2.
\]
The first square vanishes when $z=x_1^{-1}+y_1^{-1}$, which corresponds to
\begin{equation}\label{eq:common-center}
 \theta\coloneqq\frac{1}{2-2^{1-m}-2^{1-n}}.
\end{equation}
We fix this value of $\theta$ for the rest of the section.
Then $1/2<\theta\leq2/3$ and $0<z\leq1$, so the remaining term is
strictly negative. For disconnected $H$, we first bound the component products.

\subsection{Componentwise product estimates}

In the next two lemmas, we use the products in
Table~\ref{tab:partition-products} for $N\geq1$ real pairs $(x_i,y_i)$,
without assuming that they arise from a graph. Also put

\[
 \rho_x\coloneqq\prod_{i=1}^N x_i,\qquad
 \rho_y\coloneqq\prod_{i=1}^N y_i.
\]

For $1\leq i\leq N$, suppose $b_i,d_i\geq c_i\geq0$ and
$a_i+c_i\geq b_i+d_i$. Then
\begin{equation}\label{eq:product-comparison}
 \prod_{i=1}^N a_i+\prod_{i=1}^N c_i
 \geq\prod_{i=1}^N b_i+\prod_{i=1}^N d_i.
\end{equation}
Indeed, $a_i\geq c_i+(b_i-c_i)+(d_i-c_i)$.
The product of these lower bounds expands into
$\prod b_i+\prod d_i-\prod c_i$ plus nonnegative mixed terms.

{Since $Q(\alpha,\beta)$ is affine in each variable, its four
corner values determine it on $[0,1]^2$. The next lemma bounds
$Q(0,0)$, $Q(1,0)$, and $Q(0,1)$ from above.}

\begin{lemma}\label{lem:component-squares}
Let $N\geq1$ be an integer and $x_i,y_i\geq2$ be real numbers for
$1\leq i\leq N$. Then
\[
 \rho_{2,3}\geq \rho_x^2+\rho_y^2-2,\qquad
 \rho_{2,3}+\rho_x^2-2\rho_{3,1}\geq(\rho_y-2^N)^2,\qquad
 \rho_{2,3}+\rho_y^2-2\rho_{3,2}\geq(\rho_x-2^N)^2.
\]
\end{lemma}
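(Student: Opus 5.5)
The three inequalities are elementary polynomial inequalities in the variables $x_i,y_i\geq2$. I would prove each by induction on $N$. The third inequality is the second with the roles of $x_i$ and $y_i$ interchanged: this swap exchanges $\rho_{3,1}$ with $\rho_{3,2}$ and $\rho_x$ with $\rho_y$, and fixes $\rho_{2,3}$. So only the first two need proofs.

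\emph{First inequality.} For $N=1$ it is an equality. For the inductive step, write $X=\prod_{i\leq N}x_i$, $Y=\prod_{i\leq N}y_i$ and $(x,y)=(x_{N+1},y_{N+1})$. Since $x^2+y^2-2>0$, the inductive hypothesis gives
\[
\rho_{2,3}^{(N+1)}\geq(X^2+Y^2-2)(x^2+y^2-2).
\]
Expanding, the difference from $X^2x^2+Y^2y^2-2$ equals
\[
(X^2-2)(y^2-2)+(Y^2-2)(x^2-2)-2.
\]
This is at least $4+4-2>0$, because every factor is at least $2$. Alternatively, the same bound should follow from \eqref{eq:product-comparison} applied to a suitable splitting, but the direct induction is simplest.

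\emph{Second inequality.} First I note the per-factor identity
\[
(x^2+y^2-2)+x^2+2\cdot 2y=2(x^2+2y-3)+y^2+4 .
\]
It shows that for $N=1$ the claim reads $(y-2)^2\geq(y-2)^2$, an equality. For general $N$, set
\[
D_N\coloneqq\rho_{2,3}+\rho_x^2-2\rho_{3,1}-(\rho_y-2^N)^2 .
\]
The plan is to write $D_{N+1}$ as a combination of $D_N$ and nonnegative terms. I would substitute $\rho^{(N+1)}=\rho^{(N)}\cdot(\text{new factor})$ for each of the four products and collect terms. The new factors are $x^2+y^2-2$, $x^2$, $x^2+2y-3$ and $y^2$, together with $2^{N+1}=2^N\cdot 2$.

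To organize the computation, I would use $P=x^2+2y-3$ as the basic factor. Then
\[
x^2+y^2-2=P+(y-1)^2,\qquad x^2=P-(2y-3).
\]
Similarly, the old products split as $\rho_{2,3}=\rho_{3,1}+E$ and $\rho_x^2=\rho_{3,1}-F$, where $E,F\geq0$. Here $E\geq0$ is clear since all factors are positive, and $F\geq0$ holds since $x_i^2\leq x_i^2+2y_i-3$. With these substitutions, $D_N=E-F-(\rho_y-2^N)^2$, and the cross terms in $D_{N+1}$ regroup into
\[
P\,D_N+(\text{terms in }E,\ F,\ \rho_y,\ 2^N).
\]
The remaining terms should be controlled by $E\geq(\rho_y-2^N)^2+F$ (the hypothesis) and by $\rho_y\geq 2^N$.

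The main obstacle is this regrouping in the second inequality. Unlike the first inequality, the bound is tight at $N=1$, so no slack is available and the cross terms must be arranged exactly. If the naive induction does not close, I would instead strengthen the inductive statement. A natural candidate is
\[
\rho_{2,3}-\rho_{3,1}\geq\bigl(\rho_{3,1}-\rho_x^2\bigr)+(\rho_y-2^N)^2+R_N
\]
with an explicit nonnegative remainder $R_N$. Alternatively, I would seek a two-step application of \eqref{eq:product-comparison}. For example, with $a_i=x_i^2+y_i^2-2$, $c_i=2y_i$, $b_i=x_i^2+2y_i-3$ and $d_i=y_i^2+1$, the hypotheses hold because $x_i,y_i\geq2$, and they give
\[
\rho_{2,3}+2^N\rho_y\geq\rho_{3,1}+\prod_i(y_i^2+1).
\]
I would then look for a second comparison that absorbs $\rho_x^2$, $\rho_{3,1}$ and $4^N$.
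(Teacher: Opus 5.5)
\textbf{The second inequality is not proved.} Your first inequality is correct and is the paper's argument, namely iterating the two-factor comparison. Deducing the third inequality from the second via $x_i\leftrightarrow y_i$ is fine, and so is the base case $D_1=0$. The gap is the inductive step for the second inequality: you never carry out the regrouping, and the facts you name for controlling the remainder cannot do the job.

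\textbf{Why the named facts do not suffice.} Take all $\rho$'s over $i\le N$, with $(x,y)=(x_{N+1},y_{N+1})$ and $P=x^2+2y-3$. Then the regrouping is exactly
\[
 D_{N+1}=P\,D_N+\rho_{2,3}(y-1)^2-\rho_x^2(2y-3)+P(\rho_y-2^N)^2-(y\rho_y-2^{N+1})^2 .
\]
Substitute $\rho_{2,3}=\rho_{3,1}+E$ and $\rho_x^2=\rho_{3,1}-F$. Then use the hypothesis $E\ge(\rho_y-2^N)^2+F$, together with $F\ge0$, $\rho_{3,1}\ge4^N$ and $x\ge2$. These facts yield at best the lower bound $2(\rho_y-2^N)\bigl(\rho_y-2^N(y-1)^2\bigr)$ for the remainder. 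This equals $-10$ at $N=1$, $\rho_y=3$, $y=3$. So a further quantitative input tying $\rho_{2,3}$ or $F$ to $\rho_y$ is missing.

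\textbf{The fallback via \eqref{eq:product-comparison} cannot work.} The bound $\rho_{2,3}\ge\rho_{3,1}+\prod_i(y_i^2+1)-2^N\rho_y$ discards mixed terms that grow with the $x_i$. At $N=2$, $x_1=x_2=3$, $y_1=y_2=2$, substituting it into the target gives $-10\ge0$. So no second comparison can finish from there.

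\textbf{How to close the induction.} Apply your own first inequality to the term $\rho_{2,3}(y-1)^2$ instead of the hypothesis. This gives
\[
 D_{N+1}-P\,D_N\ge\rho_x^2(y-2)^2+(\rho_y^2-2)(y-1)^2+P(\rho_y-2^N)^2-(y\rho_y-2^{N+1})^2 .
\]
The right-hand side is nondecreasing in $\rho_x$ and in $x$. At $\rho_x=2^N$, $x=2$ it equals $2\rho_y(\rho_y-2^N)+(4^N-2)(y-1)^2\ge0$. Since $P>0$, the induction closes.

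Alternatively, keep the hypothesis and rewrite the leftover terms $\rho_{3,1}(y-2)^2+F(y^2-2)$ as $\rho_x^2(y-2)^2+2F(y-1)^2$. Then use $F\ge\prod_i(2y_i+1)-4^N\ge2^N\rho_y-4^N$, which holds because $F$ is nondecreasing in each $x_i^2$. The remainder is then at least $2\rho_y(\rho_y-2^N)$. Note that bounding $\rho_{3,1}$ and $F$ separately is too lossy here.

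\textbf{Comparison with the paper.} The paper does not induct. It observes that $\rho_{2,3}+\rho_x^2-2\rho_{3,1}$ is affine and nondecreasing in each $x_j^2$, sets every $x_i=2$, and finishes with two applications of \eqref{eq:product-comparison} in the $y_i$. Once repaired as above, your induction is a valid and somewhat more direct alternative.
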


\begin{proof}
For the first inequality, iterate the comparison
valid for $x,y,u,v\geq2$:

\[
 (x^2+y^2-2)(u^2+v^2-2)-(x^2u^2+y^2v^2-2)
 =(x^2-2)(v^2-2)+(y^2-2)(u^2-2)-2\geq0.
\]

For the second inequality, we first minimize the left-hand side over
the $x_i$; the third follows by symmetry. The identity
\[
 (x_i^2+y_i^2-2)+x_i^2-2(x_i^2+2y_i-3)=(y_i-2)^2,
\]
together with $x_i^2+2y_i-3\geq x_i^2\geq0$ and
\eqref{eq:product-comparison}, implies
$\rho_{2,3}+\rho_x^2-2\rho_{3,1}\geq0$.
This expression is affine in each $x_j^2$. Its coefficient is the
same nonnegative difference for the other $N-1$ pairs
(zero when $N=1$). Therefore, decreasing each $x_i$ to $2$ gives
\[
 \rho_{2,3}+\rho_x^2-2\rho_{3,1}
 \geq\prod_{i=1}^N(y_i^2+2)+4^N-2\prod_{i=1}^N(2y_i+1).
\]
Apply \eqref{eq:product-comparison} successively with common lower
bounds $y_i^2$ and $2y_i$:
\[
 \prod_{i=1}^N(y_i^2+2)-\prod_{i=1}^Ny_i^2
 \geq2\Bigl(\prod_{i=1}^N(y_i^2+1)-\prod_{i=1}^Ny_i^2\Bigr)
 \geq2\Bigl(\prod_{i=1}^N(2y_i+1)-\prod_{i=1}^N(2y_i)\Bigr).
\]
The second step uses $y_i^2\geq2y_i$.
{Substituting into the lower bound obtained at $x_i=2$ gives
\[
 \rho_{2,3}+\rho_x^2-2\rho_{3,1}
 \geq\rho_y^2+4^N-2^{N+1}\rho_y=(\rho_y-2^N)^2.
\]}
\end{proof}

We now bound the remaining corner value $Q(1,1)$ and the
product $\rho_{3,0}$.

\begin{lemma}\label{lem:component-products}
Under the hypotheses and with the notation of
\autoref{lem:component-squares}, we have
\begin{equation}
 \rho_{3,0}\leq(3/2)^N\rho_x\rho_y,\qquad
 Q(1,1)\leq \rho_x^2+\rho_y^2-2.
 \label{eq:attachment-quadratic-bound}
\end{equation}
\end{lemma}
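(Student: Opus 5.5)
The first inequality in \eqref{eq:attachment-quadratic-bound} is a factorwise comparison. For $x,y\geq2$ we have
\[
 \tfrac32xy-(xy+x+y-3)=\tfrac12(x-2)(y-2)+1>0,
\]
and every factor $x_iy_i+x_i+y_i-3$ is positive, since it is at least $5$. Multiplying these factor inequalities over $1\leq i\leq N$ gives $\rho_{3,0}\leq(3/2)^N\rho_x\rho_y$.

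For the bound on $Q(1,1)$, I would first check the case $N=1$ by direct expansion. Writing $x=x_1$ and $y=y_1$, the linear and constant terms in
\[
 2(x^2+2y-3)+2(y^2+2x-3)-(x^2+y^2-2)+8(xy+x+y-3)-8(xy-1)-12(x+y-2)
\]
cancel, and the $xy$ terms cancel. What remains is exactly $x^2+y^2-2$. So the claim is an equality for a single component, and for $N\geq2$ it asserts that the product structure only helps.

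For general $N$, I would set $\Delta_N\coloneqq\rho_x^2+\rho_y^2-2-Q(1,1)$ and induct on $N$. Split off the last pair $(x_N,y_N)$ and write each product as $\rho_\pi=\rho'_\pi\,q_\pi(x_N,y_N)$, where primes denote the products over the first $N-1$ pairs. Then $\Delta_N$ is a combination of the primed products with coefficients that are polynomials in $(x_N,y_N)$. Using the $N=1$ identity, I would rearrange it as a nonnegative multiple of $\Delta_{N-1}$ plus a remainder. The remainder should consist of products of two ``defect'' factors, one from the first $N-1$ pairs and one from the last pair. Typical examples are $(x^2-2)(v^2-2)$ and $(y-2)^2$, matching the cross terms already seen in the proof of \autoref{lem:component-squares}.

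To control the remainder, I would follow the strategy of \autoref{lem:component-squares}. The remainder is affine in each $x_j^2$, $y_j^2$ and each product $x_jy_j$. I would push the variables down to the boundary value $2$ wherever the corresponding coefficient has a fixed sign. The surviving one-variable products would then be compared using \eqref{eq:product-comparison}, with common lower bounds such as $x_iy_i$, $2x_i$ and $2y_i$, together with $x_i,y_i\geq2$.

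I expect the main obstacle to be organizing these nine products into pairs to which \eqref{eq:product-comparison} applies. This requires a common lower bound $c_i$ and the condition $a_i+c_i\geq b_i+d_i$ in each factor, and the positive terms $2\rho_{3,1}+2\rho_{3,2}+8\rho_{3,0}$ must be dominated by the negative ones $\rho_{2,3}+4\rho_{2,4}+6\rho_4+\rho_x^2+\rho_y^2$. I would try the grouping suggested by the factorwise identity
\[
 2(xy-1)+2(x+y-2)=2(xy+x+y-3),
\]
which shows $\rho_{2,4}$ and $\rho_4$ together controlling $\rho_{3,0}$, using $c_i=2(x_i+y_i-2)$ and the lower bound $\rho_{3,0}\geq$ the mixed terms. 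Similarly I would use $x^2+y^2-2$ together with $x^2$ (respectively $y^2$) against $x^2+2y-3$ (respectively $y^2+2x-3$), exactly as in \autoref{lem:component-squares}. Any slack left over would be absorbed by the first inequality of \autoref{lem:component-squares}.
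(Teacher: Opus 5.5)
\textbf{There is a genuine gap: the case $N\geq2$ is only a plan, and the one concrete grouping you name provably fails.}

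Your first inequality is proved correctly, by the same factorwise comparison as in the paper. Your expansion giving $Q(1,1)=x_1^2+y_1^2-2$ for $N=1$ is also right. The problems are these.

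\begin{itemize}
\item The induction gives no leverage. Since $\Delta_1\equiv0$, any identity $\Delta_2=\lambda\Delta_1+(\text{remainder})$ has remainder equal to $\Delta_2$ itself. You never exhibit this remainder, so the case $N=2$ is the full statement and is left unproved.
\item The reduction you borrow from \autoref{lem:component-squares} does not transfer. $\Delta$ is not a function of $x_j^2$, $y_j^2$ and $x_jy_j$ alone: the factors of $\rho_{3,2}$, $\rho_{3,0}$ and $\rho_4$ contain $x_j$ linearly. So ``pushing variables down to $2$ where the coefficient has a fixed sign'' needs a genuine monotonicity argument on $[2,\infty)$.
\end{itemize}

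The concrete grouping in your last paragraph cannot work. $\Delta$ contains $\rho_{2,3}$ only once, but the second and third inequalities of \autoref{lem:component-squares} each consume a full copy. The first inequality there bounds $\rho_{2,3}$ from below, so it cannot pay for the missing copy. Using a single copy also fails. Write
\[
\Delta=\bigl[\rho_{2,3}+\rho_x^2-2\rho_{3,1}\bigr]+\bigl[\rho_y^2-2\rho_{3,2}+2\rho_4-2\bigr]+4\bigl[\rho_{2,4}+\rho_4-2\rho_{3,0}\bigr],
\]
and bound the first bracket by $(\rho_y-2^N)^2$ and the third by $0$. Take $N=2$, $(x_1,y_1)=(2,Y)$ and $(x_2,y_2)=(2,2)$. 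Then the true value is $\Delta=4Y-2$, but your bound equals $-2Y^2+8Y-4<0$ for $Y\geq4$. The mirror assignment fails in the same way at $(x_1,y_1)=(X,2)$. Along such rays $\Delta$ grows only linearly while the individual products are quadratic, so a fixed pairing with these lossy bounds loses a quadratic term.

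The missing idea in the paper is coordinatewise monotonicity of $\Delta$, proved by differentiating. In $\tfrac12\,\partial\Delta/\partial x_j$, the factor $x_j$ multiplies only $\rho^{\widehat j}_{2,3}+(\rho^{\widehat j}_x)^2-2\rho^{\widehat j}_{3,1}$, so one copy of $\rho_{2,3}$ per direction suffices. The stray $-2\rho^{\widehat j}_{3,2}$ is paid for by part of $4y_j(\rho^{\widehat j}_{2,4}-\rho^{\widehat j}_{3,0})$, using $y_j\geq2$. This requires a second monotonicity step in the $y_k$ and the boundary comparison $4x_i-2\geq2x_i+1$. Monotonicity reduces everything to $x_i=y_i=2$, where one still has to check $8\cdot4^N+5\cdot6^N-12\cdot5^N-2\geq0$.

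Alternatively, your ``push to $2$'' heuristic can be made rigorous by expanding $\Delta$ in $x_i-2$ and $y_i-2$ and verifying that every coefficient is nonnegative; the constant term is exactly that corner value. That verification is the content your proposal leaves out.
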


\begin{proof}
{For the first inequality, note that
\[
 \tfrac32x_iy_i-(x_iy_i+x_i+y_i-3)
 =\tfrac12(x_i-2)(y_i-2)+1\geq0.
\]
Multiplying these inequalities over $i=1,\ldots,N$ gives
$\rho_{3,0}\leq(3/2)^N\rho_x\rho_y$.}

For the second inequality in \eqref{eq:attachment-quadratic-bound}, put $\Delta=\rho_x^2+\rho_y^2-2-Q(1,1)$.
We show that $\Delta$ is nondecreasing in each coordinate and then
evaluate it when all $x_i=y_i=2$.
Applying \eqref{eq:product-comparison} with common lower factors
$2(x_i+y_i-2)$ gives
\[
 \rho_{3,1}+\rho_{3,2}\leq\rho_{2,3}+\rho_4,\qquad
 2\rho_{3,0}\leq\rho_{2,4}+\rho_4.
\]
Also, $\rho_{2,3}\geq\rho_{3,2}$ and
$\rho_{2,4}\geq\rho_{3,0}\geq\rho_4$ factorwise.
{In the derivative formulas below, a superscript $\widehat j$ or
$\widehat{j,k}$ on a product indicates omission of the factors with the
specified indices.
For $1\leq j\leq N$, differentiation gives
\[
 \begin{aligned}
 \frac12\frac{\partial\Delta}{\partial x_j}
 ={}&x_j\bigl(\rho_{2,3}^{\widehat j}+(\rho_x^{\widehat j})^2-2\rho_{3,1}^{\widehat j}\bigr)
       +4y_j(\rho_{2,4}^{\widehat j}-\rho_{3,0}^{\widehat j})-2\rho_{3,2}^{\widehat j}+6\rho_4^{\widehat j}-4\rho_{3,0}^{\widehat j}\\
 \geq{}&2\bigl(\rho_{2,3}^{\widehat j}+(\rho_x^{\widehat j})^2+\rho_{2,4}^{\widehat j}-2\rho_{3,1}^{\widehat j}-\rho_{3,2}^{\widehat j}\bigr)
       +6(\rho_{2,4}^{\widehat j}+\rho_4^{\widehat j}-2\rho_{3,0}^{\widehat j}).
 \end{aligned}
\]
For $N\geq2$, the inequality follows from \autoref{lem:component-squares}
applied to the indices other than $j$, together with
$\rho_{2,4}^{\widehat j}\geq\rho_{3,0}^{\widehat j}$ and $x_j,y_j\geq2$.
For $N=1$, all omitted products equal $1$, so the derivative and both
brackets vanish.
The second bracket is nonnegative by the product comparison above.
For $N\geq2$ and $k\ne j$, half the derivative of the first bracket
with respect to $y_k$ is
\[
 y_k(\rho_{2,3}^{\widehat{j,k}}-\rho_{3,2}^{\widehat{j,k}})
 +x_k\rho_{2,4}^{\widehat{j,k}}-2\rho_{3,1}^{\widehat{j,k}}
 \geq2(\rho_{2,3}^{\widehat{j,k}}+\rho_{2,4}^{\widehat{j,k}}-\rho_{3,1}^{\widehat{j,k}}-\rho_{3,2}^{\widehat{j,k}})\geq0.
\]
Hence, the first bracket is minimized when $y_i=2$ for every $i\ne j$.
At this boundary, $\rho_{2,4}^{\widehat j}\geq\rho_{3,2}^{\widehat j}$
factorwise, since $4x_i-2\geq2x_i+1$.
Applying \autoref{lem:component-squares} to the remaining $N-1$ pairs
proves that the first bracket is nonnegative.
Thus, $\partial\Delta/\partial x_j\geq0$ for every $j$, and symmetry
gives $\partial\Delta/\partial y_j\geq0$ as well.}
Consequently,
\[
 \begin{aligned}
 \Delta
 &\geq8\cdot4^N+5\cdot6^N-12\cdot5^N-2
 =30\bigl(6^{N-1}+4^{N-1}-2\cdot5^{N-1}\bigr)
       +2(4^{N-1}-1)\geq0,
 \end{aligned}
\]
which completes the proof.
\end{proof}

\subsection{Proof of \autoref{thm:main-connected}}

{We return to a spanning subgraph $H$ of $K_{m,n}-W$.
Each of its $N$ components containing an edge meets both vertex classes,
so $x_i,y_i\geq2$. The next bound retains the term involving
$\rho_{2,4}$ for the final strict inequality.

\begin{lemma}\label{lem:extension-remainder}
For $m,n\geq3$ and $\theta$ as in~\eqref{eq:common-center},
every spanning subgraph $H$ of $K_{m,n}-W$ with $N\geq1$ satisfies
\[
 \Phi+z^2\Bigl(1+\frac{\alpha\beta}{4}\rho_{2,4}\Bigr)
 \leq2\alpha\beta\,2^{-l-r}.
\]
\end{lemma}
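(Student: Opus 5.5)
Starting from \autoref{lem:four-endpoint-counts}, the left-hand side of the lemma equals
\[
 z^2(\rho_1+1)-2z\bigl(\beta\rho_{2,1}+\alpha\rho_{2,2}-\alpha\beta\rho_{3,0}\bigr)+Q(\alpha,\beta).
\]
The task is to bound this expression by $2\alpha\beta 2^{-l-r}$. The parameters satisfy $\alpha=(2/3)^l$ and $\beta=(2/3)^r$. Since $x_i=2^{|V(U_i)\cap R|}$ and $y_i=2^{|V(U_i)\cap L|}$, we have $\rho_x=2^{n-2-r}$ and $\rho_y=2^{m-2-l}$. Moreover $z=2^{2-n}+2^{2-m}$ by \eqref{eq:common-center}. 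The approach is to handle the $z$-free part $Q(\alpha,\beta)$ by the corner bounds, the $z$-linear part by product estimates, and the $z^2$ part by comparing $\rho_1$ with $\rho_x^2\rho_y^2$. The aim is to reduce everything to an explicit inequality in $\rho_x,\rho_y,l,r,N$.

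\textbf{Step 1: the term $Q(\alpha,\beta)$.} Since $Q$ is affine in each variable, it is a bilinear interpolation of its four corner values:
\[
 Q(\alpha,\beta)=(1-\alpha)(1-\beta)Q(0,0)+\alpha(1-\beta)Q(1,0)+(1-\alpha)\beta Q(0,1)+\alpha\beta Q(1,1).
\]
\autoref{lem:component-squares} gives
\[
 Q(0,0)=-\rho_{2,3}\le 2-\rho_x^2-\rho_y^2,\qquad Q(1,0)\le 2-\rho_y^2-(\rho_x-2^N)^2,
\]
and symmetrically for $Q(0,1)$. \autoref{lem:component-products} gives $Q(1,1)\le\rho_x^2+\rho_y^2-2$. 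The corner $(1,1)$ carries the only positive bound, but its weight $\alpha\beta$ is small when $l+r$ is large. For $l=r=0$ we have $\alpha\beta=1$, and the positive term must be cancelled by the $z$-dependent terms, as happens in the connected case computed before \eqref{eq:common-center}.

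\textbf{Step 2: the $z$-linear term.} I would bound $\rho_{2,1}$ and $\rho_{2,2}$ from below factorwise, using $x_i(y_i^2+1)-2\ge x_iy_i^2\cdot c$ for a suitable constant $c$. One has $c=3/4$ at $x_i=y_i=2$; a sharper choice comes from $\rho_{2,1}\ge\rho_x\rho_y^2-\text{lower order}$, obtained via \eqref{eq:product-comparison}. For the subtracted term, the bound $\rho_{3,0}\le(3/2)^N\rho_x\rho_y$ from \autoref{lem:component-products} will be used. The coefficient $z\beta\rho_{2,1}\approx(2^{2-n}+2^{2-m})\rho_x\rho_y^2$ is then of order $\rho_y^2 2^{-r}$ plus cross terms. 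This mirrors the cancellation $(zx_1y_1-x_1-y_1)^2$ in the connected case.

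\textbf{Step 3: the $z^2$ term.} Use $\rho_1\le\rho_x^2\rho_y^2$. Then $z^2\rho_1\le(\rho_x2^{-r}+\rho_y2^{-l})^2\cdot 16\cdot 2^{\ldots}$; concretely,
\[
 z\rho_x\rho_y=2^{-r}\rho_y+2^{-l}\rho_x .
\]
Next expand the combined bound as a quadratic in $z$. The target is to show that it is at most a perfect-square-like expression minus terms dominating the positive contributions. After that, check the residual $2\alpha\beta 2^{-l-r}$. This residual arises from the constant $-2$ pieces and from $z^2$ in the corner $(1,1)$.

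\textbf{Main obstacle.} The main difficulty is the case of small $l,r$ with $N\ge2$. Here $\alpha\beta$ is near $1$, and the crude factorwise bounds in Step 2 lose the $(3/2)^N$ versus $2^N$ gap. If the direct expansion fails, I would instead treat the whole expression as a function of each $x_j,y_j$ separately. First show monotonicity in each coordinate, by the same derivative-and-\eqref{eq:product-comparison} technique used in \autoref{lem:component-products}. This reduces to the extremal configuration in which all but one component are single edges ($x_i=y_i=2$). That configuration is then verified explicitly as an inequality in $N$, $l$, $r$ and the sizes of the large component.
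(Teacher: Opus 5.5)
Your outline has several of the right ingredients: bilinear interpolation of $Q$ from its corners, $\rho_{3,0}\le(3/2)^N\rho_x\rho_y$, and the identity $z\rho_x\rho_y=2^{-l}\rho_x+2^{-r}\rho_y$. But it never reaches a verified inequality, and several bounds you commit to are wrong or too lossy for an estimate that turns out to be tight.

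\textbf{The bounds you commit to.}
\begin{itemize}
\item Your corner bound for $Q(1,0)=2\rho_{3,1}-\rho_{2,3}$ is false. \autoref{lem:component-squares} gives $Q(1,0)\le\rho_x^2-(\rho_y-2^N)^2$. For $N=1$, $x_1=y_1=2$ one has $Q(1,0)=4$, while your bound $2-\rho_y^2-(\rho_x-2^N)^2$ equals $-2$.
\item So the corner $(1,1)$ is not the only positive one. The terms $\alpha\rho_x^2+\beta\rho_y^2$ coming from $Q$ must be cancelled by the $z$-terms, and that needs the leading coefficients exactly.
\item Since $x_i(y_i^2+1)-2-x_iy_i^2=x_i-2\ge0$, you have $\rho_{2,1}\ge\rho_x\rho_y^2$ and $\rho_{2,2}\ge\rho_x^2\rho_y$ with constant $1$. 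Your ``$c=3/4$ at $x_i=y_i=2$'' is a miscalculation; the ratio there is $1$.
\item You must use $\rho_1\le\rho_x^2\rho_y^2-1$, not $\rho_1\le\rho_x^2\rho_y^2$, so that the extra $z^2$ on the left is absorbed.
\end{itemize}
Only with these bounds do the $z$-dependent terms collapse, via the vertex-total identity, to
$(2^{-l}\rho_x+2^{-r}\rho_y)\bigl[(2^{-l}-2\alpha)\rho_x+(2^{-r}-2\beta)\rho_y+2\alpha\beta(3/2)^N\bigr]$.
There is no slack. Take $m=n=3$ and $H$ a single edge, so $l=r=0$, $N=1$, $x_1=y_1=2$, $z=1$. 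The sharp bounds give exactly $2=2\alpha\beta\,2^{-l-r}$. Your Step~3 gives $3$, and $c=3/4$ would add another $8$.

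\textbf{The missing optimization step.} This is the decisive part of the argument, and your plan does not contain it. The paper proceeds as follows.
\begin{itemize}
\item Insert the interpolated corner bounds into the expression above.
\item Regard the result as a quadratic in independent variables $\rho_x,\rho_y\ge2^N$.
\item Show it is nonincreasing in each variable, using the convexity bounds $3\alpha\le1+2^{1-l}$ and $3\beta\le1+2^{1-r}$. For instance, the $\rho_x$-coefficient of half the derivative is $-(1-2^{-l})(1-2\alpha+2^{-l})\le0$.
\item Evaluate at $(2^N,2^N)$.
\item Finish with $2\cdot3^N\le4^N+2$ and, for $\eta=\alpha+\beta-2^{-l}-2^{-r}$, the estimates $0\le\eta\le\alpha\beta+(1-\alpha)(1-\beta)$ and $\eta\le(2-\alpha-\beta)/2$.
\end{itemize}

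\textbf{Why your fallback cannot replace this.} Coordinatewise monotonicity in the $x_j,y_j$ fails in both readings.
\begin{itemize}
\item With $z$ held fixed, the inequality is false. For $N=1$, $l=r=0$ the left-hand side equals $(zx_1y_1-x_1-y_1)^2+2(z-1)(x_1y_1+1)$, which is unbounded as $x_1=y_1\to\infty$ for any fixed $z>0$.
\item With $z$ tied to the configuration, $\rho_x=2^{n-2-r}$ and $\rho_y=2^{m-2-l}$ are fixed, so single coordinates cannot be varied.
\end{itemize}
The constraint has to be used to eliminate $z$ before optimizing. After that, the worst case for the bound is $\rho_x=\rho_y=2^N$, where every component is a single edge, not the one-large-component configuration you anticipate.
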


\begin{proof}
We first obtain a quadratic upper bound in $\rho_x,\rho_y$, then
maximize it on $[2^N,\infty)^2$.
The vertex totals in the two classes imply
\begin{equation}
 2^r\rho_x=2^{n-2},\qquad 2^l\rho_y=2^{m-2},\qquad
 z\rho_x\rho_y=2^{-l}\rho_x+2^{-r}\rho_y.
 \label{eq:vertex-totals}
\end{equation}
Interpolating the four corner bounds in
\autoref{lem:component-squares} and \autoref{lem:component-products} gives
\[
 Q(\alpha,\beta)\leq
 \alpha\rho_x^2+\beta\rho_y^2-2\alpha\beta
 -(1-\alpha)(1-\beta)(\rho_x^2+\rho_y^2-2)
 -\alpha(1-\beta)(\rho_y-2^N)^2-\beta(1-\alpha)(\rho_x-2^N)^2.
\]
To bound the remaining terms in $\Phi$, we apply the following inequalities, which hold for $x_i,y_i\geq2$:
\[
 \rho_1\leq\rho_x^2\rho_y^2-1,\qquad
 \rho_{2,1}\geq\rho_x\rho_y^2,\qquad
 \rho_{2,2}\geq\rho_x^2\rho_y.
\]
The first inequality in \autoref{lem:component-products} controls the
remaining positive term involving $\rho_{3,0}$. Hence,
\begin{equation}
 \Phi+z^2\Bigl(1+\frac{\alpha\beta}{4}\rho_{2,4}\Bigr)
 \leq Q(\alpha,\beta)+(2^{-l}\rho_x+2^{-r}\rho_y)\cdot\bigl[
 (2^{-l}-2\alpha)\rho_x+(2^{-r}-2\beta)\rho_y
 +2\alpha\beta(3/2)^N\bigr].
 \label{eq:centered-extension-bound}
\end{equation}

Substitute the upper bound for $Q(\alpha,\beta)$ into
\eqref{eq:centered-extension-bound}. With $l,r,N$ fixed, regard the
resulting quadratic as a function of independent variables
$\rho_x,\rho_y\geq2^N$. We show that it is nonincreasing in each variable
on this larger domain.
Apply convexity of $v\mapsto v^l$ and $v\mapsto v^r$ at
$2/3=\tfrac13\cdot1+\tfrac23\cdot\tfrac12$ to obtain
$3\alpha\leq1+2^{1-l}$ and $3\beta\leq1+2^{1-r}$.
In particular,
\[
 0\leq\alpha-2^{-l}\leq\frac{1-\alpha}{2},\qquad
 0\leq1-2^{-r}\leq2(1-\beta).
\]
Half the partial derivative of the quadratic polynomial with respect
to $\rho_x$ is
\[
 (2\alpha-1+2^{-2l}-2^{1-l}\alpha)\rho_x
 +(2^{-l-r}-\beta2^{-l}-\alpha2^{-r})\rho_y
 +2^N\beta(1-\alpha)+\alpha\beta(3/2)^N2^{-l}.
\]
The coefficient of $\rho_x$ equals
$-(1-2^{-l})(1-2\alpha+2^{-l})\leq0$ by convexity;
that of $\rho_y$ is nonpositive because $2^{-r}\leq\beta$.
Since $\rho_x,\rho_y\geq2^N$ and $(3/2)^N\leq2^N$,
half the derivative is at most
\[
 2^N\Bigl[(1-2^{-l})\bigl((\alpha-2^{-l})(1-2^{-r})
 -(1-\alpha)(1-\beta)\bigr)
 -(\alpha-2^{-l})2^{-l-r}\Bigr]\leq0.
\]
The difference in parentheses is nonpositive by the two bounds from convexity
above, and the remaining term is nonpositive.
The derivative with respect to $\rho_y$ is nonpositive by symmetry.
Thus, the maximum occurs at $(2^N,2^N)$.

At $(\rho_x,\rho_y)=(2^N,2^N)$, the upper bound for $Q(\alpha,\beta)$ is
\begin{equation}
 4^N\bigl[\alpha+\beta-2(1-\alpha)(1-\beta)\bigr]
 +2(1-\alpha)(1-\beta)-2\alpha\beta.
 \label{eq:upper-bound-for-K}
\end{equation}
Put $\eta\coloneqq\alpha+\beta-2^{-l}-2^{-r}$.
To combine the remaining boundary term with
\eqref{eq:upper-bound-for-K}, use $2\cdot3^N\leq4^N+2$, valid for
$N\geq1$ by induction:
\begin{align}
 &4^N\bigl[\eta^2-(\alpha+\beta)^2\bigr]
       +2\alpha\beta\,3^N(\alpha+\beta-\eta)\notag\\
 &\leq4^N\bigl[\eta(\eta-\alpha\beta)
       -(\alpha+\beta)(\alpha+\beta-\alpha\beta)\bigr]
       +2\alpha\beta(2^{-l}+2^{-r}).
 \label{eq:remaining-term-estimate}
\end{align}

Since $\alpha^2\leq2^{-l}\leq\alpha$, and similarly for $\beta$,
\[
 0\leq\eta\leq\alpha(1-\alpha)+\beta(1-\beta)
 \leq\alpha\beta+(1-\alpha)(1-\beta).
\]
The last inequality is equivalent to $(\alpha+\beta-1)^2\geq0$.
The convexity bounds above also give
$\eta\leq(2-\alpha-\beta)/2$. Consequently, the coefficient of $4^N$
obtained by adding the bounds in~\eqref{eq:upper-bound-for-K} and
\eqref{eq:remaining-term-estimate} satisfies
\begin{equation}
 \eta(\eta-\alpha\beta)
 -(1-\alpha)(1-\beta)(2-\alpha-\beta)
 \leq-\tfrac12(1-\alpha)(1-\beta)(2-\alpha-\beta).
 \label{eq:isolated-vertex-estimate}
\end{equation}

Add the bounds in \eqref{eq:upper-bound-for-K} and
\eqref{eq:remaining-term-estimate}, apply
\eqref{eq:isolated-vertex-estimate}, and use $4^N\geq4$:
\[
 \begin{aligned}
 \Phi+z^2\Bigl(1+\frac{\alpha\beta}{4}\rho_{2,4}\Bigr)
 &\leq2(\alpha+\beta-1)(1-\alpha)(1-\beta)
       -2\alpha\beta(1-2^{-l}-2^{-r})\\
 &\leq2\alpha\beta\,2^{-l-r}
       -2(1-\alpha)^2(1-\beta)^2
 \leq2\alpha\beta\,2^{-l-r}.
 \end{aligned}
\]
For the second inequality, we used
$1-2^{-l}-2^{-r}\geq(1-\alpha)(1-\beta)-2^{-l-r}$,
which follows from $2^{-l}\leq\alpha$ and $2^{-r}\leq\beta$.
\end{proof}
}

{We use a lower bound for $\rho_{2,4}$ to absorb the positive
remainder in \autoref{lem:extension-remainder}, and treat the edgeless
case directly.}

\begin{lemma}\label{lem:centered-extensions}
For $m,n\geq3$ and $\theta$ as in~\eqref{eq:common-center},
every spanning subgraph $H$ of $K_{m,n}-W$ satisfies
\[
 j_{ef}^H-2\theta j_e^H+\theta^2j^H<0.
\]
\end{lemma}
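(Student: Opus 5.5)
Since $\theta^23^{l+r}>0$, the claim is equivalent to $\Phi<0$. I would split into the cases $N\geq1$ and $N=0$ (edgeless $H$).

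\emph{Case $N\geq1$.} By \autoref{lem:extension-remainder},
\[
 \Phi\leq2\alpha\beta\,2^{-l-r}-z^2-\frac{z^2\alpha\beta}{4}\rho_{2,4},
\]
so it suffices to show
\[
 z^2\Bigl(1+\frac{\alpha\beta}{4}\rho_{2,4}\Bigr)>2\alpha\beta\,2^{-l-r}.
\]
Factorwise $2(x_iy_i-1)\geq x_iy_i$ for $x_i,y_i\geq2$, so $\rho_{2,4}\geq\rho_x\rho_y$.
By \eqref{eq:vertex-totals}, $z\rho_x\rho_y=2^{-l}\rho_x+2^{-r}\rho_y$. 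Thus
\[
 z^2\rho_x\rho_y=\frac{(2^{-l}\rho_x+2^{-r}\rho_y)^2}{\rho_x\rho_y}\geq4\cdot2^{-l-r}
\]
by AM--GM. Hence
\[
 \frac{z^2\alpha\beta}{4}\rho_{2,4}\geq\alpha\beta\,2^{-l-r}.
\]
This absorbs half of the remainder. For the other half I need
\[
 z^2>\alpha\beta\,2^{-l-r}.
\]
I would get this from $z=2^{2-n}+2^{2-m}$. Indeed, $\theta$ in \eqref{eq:common-center} gives $z=4-2/\theta=2^{2-m}+2^{2-n}$. Together with $2^{l}\leq2^{m-2}$ and $2^r\leq2^{n-2}$, we get $z\geq2^{-l}+2^{-r}$, so $z^2\geq4\cdot2^{-l-r}>\alpha\beta\,2^{-l-r}$.

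A cleaner alternative is to keep the AM--GM bound with the factor $\rho_{2,4}\geq2^N\rho_x\rho_y/\ldots$. Either way the strict inequality comes with room to spare, since $z^2>0$ strictly.

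\emph{Case $N=0$.} All products equal $1$, and $l=r$ counts all vertices outside $W$: $l=m-2\geq1$ and $r=n-2\geq1$. Then $\alpha=(2/3)^{m-2}$, $\beta=(2/3)^{n-2}$, and $z=2^{2-m}+2^{2-n}$. Substituting $\rho=1$ into \autoref{lem:four-endpoint-counts} gives
\[
 \Phi=z^2\Bigl(1-\frac{\alpha\beta}{4}\Bigr)-2z(\beta+\alpha-\alpha\beta)+2\alpha+2\beta-1+\alpha\beta(8-4-6).
\]
So I must show the explicit two-variable expression
\[
 z^2\Bigl(1-\frac{\alpha\beta}4\Bigr)-2z(\alpha+\beta-\alpha\beta)+2\alpha+2\beta-1-2\alpha\beta=-(1-2\alpha)(1-2\beta)\ \text{(after regrouping)}+\dots
\]
is negative. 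Note that
\[
 2\alpha+2\beta-1-2\alpha\beta=\tfrac12-\tfrac12(1-2\alpha)(1-2\beta)-\ldots
\]
I would instead treat it directly in terms of $u=2^{2-m}\leq1/2$ and $v=2^{2-n}\leq1/2$, with $\alpha\leq2/3$, $\beta\leq2/3$.

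The main obstacle is this edgeless case: at $m=n=3$ one has $\alpha=\beta=2/3$ and $z=1$, giving
\[
 \Phi=\tfrac89-\tfrac{16}9+\tfrac53-\tfrac89=-\tfrac19<0.
\]
This is fine, but for general $m,n$ the expression is not monotone in an obvious way. My plan is to bound
\[
 z^2\leq z\cdot(u+v)\quad\text{and}\quad -2z(\alpha+\beta-\alpha\beta)\leq0,
\]
which leaves the need for $2\alpha+2\beta-1-2\alpha\beta<0$. That last term is $-(1-2\alpha)(1-2\beta)\ldots$ and is not always negative, so I would check small values ($m$ or $n$ equal to $3$ or $4$) by hand. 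For larger values I would use that $\alpha,\beta$ decay geometrically: once $\alpha,\beta\leq4/9$ the constant part $2\alpha+2\beta-2\alpha\beta-1$ is already $<0$ and dominates $z^2\leq1/4$ terms combined with the negative linear term.
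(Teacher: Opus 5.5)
Your proposal has a genuine gap in each of the two cases.

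\textbf{Case $N\ge1$.} The bound $\rho_{2,4}\ge\rho_x\rho_y$ absorbs only half of the remainder from \autoref{lem:extension-remainder}. The step meant to handle the other half has its inequality reversed. Since $l\le m-2$ and $r\le n-2$, you have $2^{-l}\ge2^{2-m}$ and $2^{-r}\ge2^{2-n}$, so $z=2^{2-m}+2^{2-n}\le2^{-l}+2^{-r}$. The inequality you need, $z^2>\alpha\beta\,2^{-l-r}=3^{-l-r}$, is in fact false: for connected $H$ one has $l=r=0$ and $z\le1$.

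The missing ingredient is the sharper bound $\rho_{2,4}\ge2\rho_x\rho_y-2$. This is an equality for $N=1$. For $N\ge2$ it follows from $2(x_iy_i-1)\ge\tfrac32x_iy_i$, which gives $\rho_{2,4}\ge(3/2)^N\rho_x\rho_y\ge2\rho_x\rho_y$. Combined with your AM--GM estimate it yields $2\alpha\beta\,2^{-l-r}\le\tfrac{\alpha\beta}{2}z^2\rho_x\rho_y\le\tfrac{\alpha\beta}{4}z^2\rho_{2,4}+\tfrac{\alpha\beta}{2}z^2$. The whole remainder is then absorbed, and $\Phi\le-z^2(1-\alpha\beta/2)<0$.

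The $-2$ correction at $N=1$ is not negligible, so your vaguer ``cleaner alternative'' does not help. With only $\rho_{2,4}\ge\tfrac32\rho_x\rho_y$, the resulting estimate already fails for $m=n=4$ and connected $H$: it gives $\tfrac74$ against the needed $2$.

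\textbf{Case $N=0$.} Your expression for $\Phi$ is correct, including the value $-\tfrac19$ at $m=n=3$. However, discarding $-2z(\alpha+\beta-\alpha\beta)$ cannot work. The remaining constant is $2\alpha+2\beta-1-2\alpha\beta=1-2(1-\alpha)(1-\beta)$, which is positive whenever $(1-\alpha)(1-\beta)<1/2$.

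For instance, it equals $31/81$ at $m=n=4$, so your claim that it is negative once $\alpha,\beta\le4/9$ is false. It is also at least $1/3$ for $m=3$ and every $n\ge3$. Infinitely many cases therefore escape any finite hand check, and the linear term is essential.

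The paper instead completes the square:
\[
\Phi=-(1-z)\bigl[(\alpha+\beta-1)^2+z-\alpha^2-\beta^2\bigr]-\frac{z^2}{4}\alpha\beta .
\]
It then uses $z=2^{-l}+2^{-r}\le1$ together with $\alpha^2=(4/9)^l\le2^{-l}$ and $\beta^2\le2^{-r}$. The first term is therefore nonpositive and the second strictly negative.
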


{\begin{proof}
Since $\theta^2 3^{l+r}>0$, it suffices to prove $\Phi<0$.
If $N=0$, then $l=m-2\geq1$ and $r=n-2\geq1$.
In this case $z=2^{-l}+2^{-r}$.
Taking $N=0$ in \autoref{lem:four-endpoint-counts} and completing the square gives
\[
 \Phi=-(1-z)\bigl[(\alpha+\beta-1)^2
         +z-\alpha^2-\beta^2\bigr]
       -\frac{z^2}{4}\alpha\beta<0.
\]
Indeed, $z\leq1$, while
$\alpha^2=(4/9)^l\leq2^{-l}$ and
$\beta^2=(4/9)^r\leq2^{-r}$; the last term is strictly negative.

Now suppose that $N\geq1$. We have $\rho_{2,4}\geq2\rho_x\rho_y-2$.
For $N=1$, equality holds. For $N\geq2$, the factorwise bound
$2(x_iy_i-1)\geq\tfrac32x_iy_i$ implies
$\rho_{2,4}\geq(3/2)^N\rho_x\rho_y\geq2\rho_x\rho_y$.
By \eqref{eq:vertex-totals} and the arithmetic--geometric mean inequality,
we obtain
\[
 z^2\rho_x\rho_y
 =\frac{(2^{-l}\rho_x+2^{-r}\rho_y)^2}{\rho_x\rho_y}
 \geq4\cdot2^{-l-r}.
\]
Therefore, \autoref{lem:extension-remainder} yields
\[
 \Phi\leq\frac{\alpha\beta}{2}z^2\rho_x\rho_y
 -z^2\Bigl(1+\frac{\alpha\beta}{4}\rho_{2,4}\Bigr)
 \leq-z^2(1-\alpha\beta/2)<0.\qedhere
\]
\end{proof}
}

\begin{proof}[Proof of \autoref{thm:main-connected}]
By \autoref{cor:pair-order}, it suffices to consider disjoint edges
$e=ab$ and $f=cd$.
When $m=2$, a spanning subgraph is connected precisely when every
vertex of $R$ has a nonempty neighbourhood and at least one has both
vertices of $L$ as neighbours. Counting the three possible nonempty
neighbourhoods, with the prescribed edges when required, gives
\[
 j(K_{2,n})=3^n-2^n,
 j_e(K_{2,n})=j_f(K_{2,n})=2\cdot3^{n-1}-2^{n-1},
 j_{ef}(K_{2,n})=4\cdot3^{n-2}-2^{n-2}.
\]
Consequently,
$j_e(K_{2,n})j_f(K_{2,n})-j(K_{2,n})j_{ef}(K_{2,n})
=6^{n-2}>0$.
The case $n=2$ follows by symmetry. Now let $m,n\geq3$ and choose $\theta$ as in \eqref{eq:common-center}.
Sum the inequalities in \autoref{lem:centered-extensions}
over all spanning subgraphs $H$ of $K_{m,n}-W$:
\[
 j_{ef}(K_{m,n})-2\theta j_e(K_{m,n})
       +\theta^2j(K_{m,n})
 =\sum_H\bigl(j_{ef}^H-2\theta j_e^H+\theta^2j^H\bigr)<0.
\]
Since $j(K_{m,n})>0$, the desired strict inequality now
follows from \eqref{eq:centering-identity}.
\end{proof}

{\autoref{lem:centered-extensions} gives strict negative
correlation after conditioning on a fixed restriction. 

\begin{corollary}\label{cor:conditional-connected}
For $m,n\geq2$, let $H$ be a spanning subgraph of $K_{m,n}-W$,
where $W=\{a,b,c,d\}$ is as above.
For any distinct edges $e',f'\in\{ab,ad,cb,cd\}$, the corresponding
extension counts satisfy
\[
 j_{e'}^Hj_{f'}^H-j_{e'f'}^Hj^H>0.
\]
\end{corollary}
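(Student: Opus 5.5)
The plan is to reduce every pair to the disjoint pair $\{ab,cd\}$ at the level of a fixed restriction $H$, and then read off strictness from \autoref{lem:centered-extensions}, with the small cases handled by direct counting.

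\emph{Symmetries fixing $H$.} The transpositions $a\leftrightarrow c$ and $b\leftrightarrow d$ are automorphisms of $K_{m,n}$ that fix every vertex outside $W$. They therefore fix $H$ and permute the graphs $G$ with $G-W=H$, preserving connectivity. Together they act transitively on the four edges $ab,ad,cb,cd$, so
\[
j_{ab}^H=j_{ad}^H=j_{cb}^H=j_{cd}^H .
\]
The transposition $b\leftrightarrow d$ maps the disjoint pair $\{ab,cd\}$ to $\{ad,cb\}$, so both disjoint pairs have the same two-edge count. The same symmetries reduce the four adjacent pairs to $\{ab,ad\}$ and $\{ab,cb\}$.

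\emph{Comparison of two-edge counts.} I will show $j_{ab,cd}^H\geq j_{ab,ad}^H$ by repeating the proof of \autoref{prop:connected-pair-comparison} at $t=1$. The only change is that the sum runs over restrictions $H'=G-\{b,d\}$ satisfying $H'-\{a,c\}=H$, rather than over all $H'$. The swap $a\leftrightarrow c$ permutes this restricted family, because it fixes $H$. Hence the averaging step \eqref{eq:connected-pair-comparison} remains valid with the sum limited to these $H'$, and the termwise inequality $2\varphi_{a,c}(H')-\varphi_{a,a}(H')-\varphi_{c,c}(H')\geq0$ gives the comparison. Exchanging the roles of $L$ and $R$ gives $j_{ab,cd}^H\geq j_{ab,cb}^H$.

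Combining the two steps, $j_{e'}^Hj_{f'}^H-j_{e'f'}^Hj^H\geq j_{ab}^Hj_{cd}^H-j_{ab,cd}^Hj^H$ for every admissible pair $e',f'$. It therefore suffices to prove strictness for $e=ab$, $f=cd$.

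\emph{Disjoint pair, $m,n\geq3$.} Apply the completing-the-square identity \eqref{eq:centering-identity} to the $H$-counts, using $j_e^H=j_f^H$ and $\theta$ from \eqref{eq:common-center}:
\[
j_e^Hj_f^H-j_{ef}^Hj^H=(j_e^H-\theta j^H)^2-j^H\bigl(j_{ef}^H-2\theta j_e^H+\theta^2j^H\bigr).
\]
\autoref{lem:centered-extensions} makes the bracket negative, so the right-hand side is positive provided $j^H>0$. Positivity holds because every component of $H$ has a vertex adjacent to some vertex of $W$ in $K_{m,n}$. Joining every component to $W$ and adding all edges of $K_{m,n}[W]$ therefore produces a connected $G$ with $G-W=H$.

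\emph{Disjoint pair, $m=2$ or $n=2$.} If $m=2$, then $L=\{a,c\}$, so $K_{m,n}-W$ has no edges and $H$ is the edgeless graph on $R\setminus\{b,d\}$. Every spanning subgraph $G$ of $K_{2,n}$ satisfies $G-W=H$, so each $H$-count equals the corresponding global count. The explicit computation in the proof of \autoref{thm:main-connected} then gives the value $6^{n-2}>0$. The case $n=2$ follows by symmetry.

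\emph{Main obstacle.} The only delicate point is the comparison step: one must check that the averaging argument of \autoref{prop:connected-pair-comparison} survives restricting the sum to those $H'$ that extend the fixed $H$. This needs the swap $a\leftrightarrow c$ to preserve that family, which it does because it fixes every vertex outside $W$.
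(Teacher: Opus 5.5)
Your proposal is correct and follows the paper's proof essentially step for step. It uses the $a\leftrightarrow c$, $b\leftrightarrow d$ symmetries fixing $H$, then restricts the averaging identity from \autoref{prop:connected-pair-comparison} to the restrictions extending $H$. For $m,n\geq3$ it completes the square with \autoref{lem:centered-extensions}, and for $m=2$ or $n=2$ it observes that $K_{m,n}-W$ is edgeless, so the global count in \autoref{thm:main-connected} applies.
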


\begin{proof}
Swapping $a,c$ or $b,d$ preserves $H$ and makes the four one-edge
inclusion counts equal. Under these swaps, every pair $\{e',f'\}$ is
equivalent to $\{ab,ad\}$, $\{ab,cb\}$, or $\{ab,cd\}$.
Set $t=1$ in \eqref{eq:connected-pair-comparison} and restrict
the sum to spanning subgraphs $K$ of $K_{m,n}-\{b,d\}$ satisfying
$K-\{a,c\}=H$.
This set is invariant under interchanging $a,c$, and the summands
are nonnegative by the proof of
\autoref{prop:connected-pair-comparison}.
Hence $j_{ab,cd}^H\geq j_{ab,ad}^H$. Exchanging the parts gives $j_{ab,cd}^H\geq j_{ab,cb}^H$.
Thus, 
\[
 j_{e'}^Hj_{f'}^H-j_{e'f'}^Hj^H
 \geq j_{ab}^Hj_{cd}^H-j^Hj_{ab,cd}^H.
\]
It remains to prove strict positivity for the disjoint pair
$e=ab$ and $f=cd$.

Adding to $H$ all vertices of $W$ and all edges incident with $W$ gives a connected extension, so $j^H>0$. If $m=2$ or $n=2$, then $K_{m,n}-W$ is edgeless, so the claim follows from
\autoref{thm:main-connected}. Now suppose $m,n\geq3$ and choose $\theta$ as in
\eqref{eq:common-center}. Since $j_e^H=j_f^H$,
\autoref{lem:centered-extensions} gives
\[
 j_e^Hj_f^H-j^Hj_{ef}^H
 =(j_e^H-\theta j^H)^2
   -j^H(j_{ef}^H-2\theta j_e^H+\theta^2j^H)>0.\qedhere
\]
\end{proof}
}

\section*{Acknowledgements}
This work is supported by the Guangdong Basic and Applied Basic Research Foundation (Grant~No.~2026A1515012237).

\section*{Declaration of generative AI and AI-assisted technologies}
The authors supplied the original manuscript and directed the research
and revision. OpenAI Codex assisted in deriving the componentwise product
estimates in \autoref{lem:component-products} and developing their use
in the proof of \autoref{lem:extension-remainder}. This work consolidated
previously separate estimates within the existing proof strategy
using a common parameter. Codex assisted with literature searches, exploratory
computations, and language editing. The authors take full responsibility for the
manuscript.

\end{document}